\begin{document}

\newtheorem{theorem}{Theorem}[section]
\newtheorem{cor}[theorem]{Corollary}
\newtheorem{rmk}[theorem]{Remark}
\newtheorem{lemma}[theorem]{Lemma}
\newtheorem{gen}{Generalization}
\newtheorem{prop}[theorem]{Proposition}
\newtheorem{claim}[theorem]{Claim}
\newtheorem{observation}[theorem]{Observation}
\newtheorem{notation}[theorem]{Notation}
\newtheorem{conjecture}[theorem]{Conjecture}
\newtheorem{defin}[theorem]{Definition}
\newtheorem{defins}[theorem]{Definitions}

\newcommand{\map}{\mbox{$\rightarrow$}}
\newcommand{\bbb}{\mbox{$\beta$}}
\newcommand{\la}{\mbox{$\lambda$}}
\newcommand{\aaa}{\mbox{$\alpha$}}
\newcommand{\eee}{\mbox{$\epsilon$}}
\newcommand{\Rrr}{\mbox{$\mathbb{R}$}}
\newcommand{\lpd}{\mbox{$L^{\mathcal{V}(P,D^*)}$}}
\newcommand{\fpd}{\mbox{$\mathcal{V}(P,D^*)$}}
\newcommand{\bdd}{\mbox{$\partial$}}

\newcommand{\Li}{\mbox{$L_+^{in}$}}
\newcommand{\Lo}{\mbox{$L_+^{out}$}}

\title{Cut-disks for level spheres in link and tangle complements}
\author{Maggy Tomova}
\thanks{Research partially supported by an NSF grant.}
\maketitle

\begin{abstract}

In \cite{Wu} Wu shows that if a link or a knot $L$ in $S^3$ in thin position has thin spheres, then the thin sphere of lowest width is an essential surface in the link complement. In this paper we show that if we further assume that $L \subset S^3$ is prime, then the thin sphere of lowest width also does not have any vertical cut-disks. We also prove the result for a specific kind of tangles in $S^2 \times [-1,1]$.

\end{abstract}
\section{Introduction}

Thin position is a knot invariant first introduced by Gabai \cite{Ga} to prove property $R$. Since then it has become of significant interest in its own right. The idea is to consider a knot in $S^3$ in Morse position with respect to the standard height function and then to isotope the knot so that  if we consider a maximal collection of level spheres no two of which are parallel in the knot complement, the sum of their intersections with the knot is minimized. A level sphere is called thin if it intersects the knot in fewer points than the two level spheres adjacent to it do. One of the interesting problems is establishing the compressibility properties of the level spheres in the knot complement. The first result in this direction was given by Thompson. In \cite{Thom} she showed that if the knot is in thin position and has some thin spheres, its complement contains an incompressible, non-boundary parallel planar surface. The result was refined by Wu \cite{Wu} who identified the thin level sphere of minimum width to be such an incompressible surface. In \cite {Tom} the current author gave further restrictions on the possible compressing disks for the thin spheres. 

 Recently, it has proven useful to consider not only compressing disks for surfaces in the knot complement but also cut-disks. These are annuli in the knot complement that have one boundary component lying in the surface considered and the other in the boundary of a regular neighborhood of the knot so that the interior of the annulus is disjoint from the surface and it is not parallel in the knot complement to an annulus on the surface. The precise definition is given later. Surfaces that do not have cut-disks can be very useful; for example if $F$ is a meridional surface with no cut-disks and no compressing disks and $B$ is any other meridional surface in the knot complement, then there is an isotopy after which all curves of $F \cap B$ are essential in $B$. 
 
 As one may expect, there are fewer obstructions to a level sphere having cut-disks than having compressing disks. One reason for that is that a compressing disk allows us to perform isotopies in one of the two balls bounded by the union of the compressing disk and the level sphere without affecting the other ball. When we are considering a cut-disks such isotopies generally create new critical points in the arc of the knot that pierces the cut-disk thus affecting the width of the knot in ways that are difficult to track. In this paper we have resolved this problem with the additional hypothesis that the cut-disk is vertical with respect to the height function, i.e., the cut-disk can be isotoped in the knot complement to be the union of a vertical annulus, possibly once-punctured by the knot, and a possibly once-punctured level disk. We will show that the thin sphere of lowest width does not have any vertical cut-disks. In addition we extend some of the previously known results about compressing disks for level sphere for a knot in thin position to tangles in thin position.  
 
 \section{preliminaries}
 
In this paper a {\em link} is a closed 1-manifold with one or more components that is embedded in $S^3$. We assume that links cannot be split, i.e., there is no $2$-sphere in $S^3$ that separates the components of $L$. Let $F$ be a meridional surface embedded in the complement of a link $L$. A {\em cut-disk} for $F$ is a disk $D^c \subset S^3$ such that $D^c \cap F =\bdd D^c$, $|D^c \cap L|=1$ and the annulus $D^c-nbhd(L)$ is not parallel  in the link complement to a subset of $F-nbhd(L)$. In particular if $L$ is prime $\bdd D^c$ is not parallel to a boundary component of $F-nbhd(L)$. We use the term {\em c-disk} to refer to either a compressing or a cut-disk. A c-disk is vertical with respect to a height function if it is the union of a vertical annulus and a level disk either one of which, but not both, may be punctured by the knot once.

Consider a height function $\pi: S^3 \to \mathbb{R}$ such that
$\pi$ restricts to a Morse function on $L$. If $t$ is a regular
value of $\pi|_L$, $\pi^{-1}(t)$ is called a level sphere with width
$w(\pi^{-1}(t))=|L\cap \pi^{-1}(t)|$. If $c_{0}<c_{1}<...<c_{n}$ are all the
critical values of $\pi|_L$, choose regular values
$r_{1},r_{2},...,r_{n}$ such that $c_{i-1}<r_{i}<c_{i}$. Then the
{\em width of $L$ with respect to $\pi$} is defined by $w(L,\pi)=\sum
w(\pi^{-1}(r_{i}))$. The {\em width} of $L$, $w(L)$ is the minimum of $w(L,\pi')$
over all possible height functions $\pi'$. We say that $L$ is in thin position whenever we consider $L$ with respect to a height function $\pi$
which realizes its width, i.e., if $w(L,\pi)=w(L)$.

Instead of considering all possible height functions of $S^3$ we can consider all possible isotopic images of $L$. The height function $\pi$ suggests an isotopy $f$ of $S^3$ such that $w(f(L), h)=w(L,\pi)$ where $h$ is the standard height function on $S^3$, namely the projection onto the third coordinate. We will take this point of view for the rest of the paper and will assume that $\pi$ is the standard height function on $S^3$.  More details about thin position and basic results can be found in \cite{Schar1}.

A level sphere $\pi^{-1}(t)$ is called {\em thin}
if the highest critical point for $L$ below it is a maximum and the
lowest critical point above it is a minimum. If the highest critical point for $L$ below $\pi^{-1}(t)$ is a minimum and the
lowest critical point above it is a maximum the level sphere is called {\em thick}. As the lowest critical point of $L$ is a minimum and the highest is a maximum, a thick level sphere can always be found. It is possible that the link does not have any thin spheres with respects to some height function and in this case the link is said to be in {\em bridge position}.

Clearly there are multiple isotopic images of a link all giving the same width. We will need to take advantage of these so we give a precise definition of such isotopies.

\begin{defin}
An isotopy that, in the end, leaves $\pi|_L$ unchanged is called level preserving. An isotopy that leaves $\pi|_L$ unchanged throughout the isotopy will be called a horizontal isotopy. 
\end{defin}

Since $L$ is in general position with respect to $\pi$ it is
disjoint from both the minimum (south pole) and maximum (north
pole) of $\pi$ on $S^{3}$.  Thus the
width of $L$ could just as easily be computed via its
diffeomorphic image in $S^{2} \times \Rrr$.  As $L$ is compact we may in fact assume that $L$ is contained in  $S^{2} \times [-1,1]$. Finally, by general position, $L$ is disjoint from some fiber of $S^{2} \times [-1,1]$ so we can also regard it as contained in the ball $D^{2} \times [-1,1]$.  We will use all of these points of view and  continue to use $\pi$ to denote the
projection onto the third coordinate. When we consider $L$ to lie in $D^{2} \times [-1,1]$, the preimages $\pi^{-1}(t)$ are level disks which can be thin, thick or neither depending on the corresponding level spheres. Similarly the width of a level disk will be the width of the corresponding level sphere and a disk in $D^{2} \times [-1,1]$ is a c-disk for a level disk if it is a c-disk for the corresponding level sphere. 

A tangle in $S^2 \times [-1,1]$ is a properly embedded one manifold with possibly multiple components. We compute the width of a tangle analogously to the width of a link. The two level sphere $w(S^2 \times
\{-1\})$ and $w(S^2 \times \{1\})$ will be considered to be thin if the highest critical point of $L$ is a maximum and the lowest one is a minimum. When we consider the tangle to lie in $D^2 \times [-1,1]$ as we did with links, it is important to perform all isotopies keeping the endpoints in $(D^2 \times \{-1\}) \cup (D^2 \times \{1\})$ so the isotopies make sense as isotopies of $S^{2} \times [-1,1]$. Unless otherwise specified we will use $L$ to denote either a link or a tangle that we consider as a tangle in $S^2 \times [-1,1]$. We will use $T$ for tangles that are embedded in a ball and thus isotopies moving their endpoints on the boundary sphere will be allowed.

\section{Notation and some definitions} \label{sec:notationanddef}

In this paper $L$ will denote an unsplit link (possibly with only one component) or an unsplit tangle embedded in $S^2 \times[-1,1]$, $P$ will denote a level sphere for $L$ and $D^*$ will be a c-disk for $P$. Without loss of generality we will assume that $P=\pi^{-1}(0)$ and $D^*$ lies above $P$. Consider a closed regular neighborhood of $D^*$, $D^*_{\delta}=D^* \times [-\delta,\delta]$ that is sufficiently small so that  if $D^*$ is a compressing disk $|D^*_{\delta} \cap L|=0$ and if $D^*$ is a cut-disk $|D^*_{\delta} \cap L|=1$ and $L\cap D^*_{\delta}$ has no critical points. We will let $B^{in}$ be the ball cobounded by $P$ and, say, $D^2 \times \{-\delta\}$ in $S^2 \times [-1,1]$, and $B^{out}$ be the  3-manifold cobounded by $P$ and $D^2 \times \{\delta\}$ (if we consider $L$ as lying in $S^3$ with all endpoints meeting at infinity, $B^{out}$ is also a ball). We will refer to $B^{in}$ and $B^{out}$ as the {\em inside} and the {\em outside} of $D^*$ respectively. As $B^{in}$ and $B^{out}$ are separated by $D^*_{\delta}$ any isotopy of $B^{in}$ can be extended to the identity on $B^{out}$ and vice versa. If $L$ is a link we can choose either side to be the inside and we will always make this choice so that the level sphere lying directly above the highest maximum of $L\cap B^{in}$ intersects $L\cap B^{out}$. If $L$ is a tangle this condition is always satisfied.

The components of $L_+=L \cap \pi^{-1}[0,1]$ can be classified based on how they lie with respect to $D^*$. We will let $\Lo=L_+\cap B^{out}$ and $\Li=L_+\cap B^{in}$. If $D^*$ is a cut-disk, we will let $\tau$ be the component of $L_+$ that intersects $D^*$ ($\tau$ is not disjoint from $\Lo$ and $\Li$) and will call $\tau$ {\em the connecting strand}. We will let $\tau'=\tau \cap D^*_{\delta}$, by definition $\tau'$ does not contain any critical points. There are two possibilities to consider, the point $\Lo \cap \tau'$ is either higher or lower than the point $\Li \cap \tau'$. If the point $\Lo \cap \tau'$ is higher than the point $\Li \cap \tau'$ let $\aaa=\Lo$ and $\bbb = \Li$. In the other case reverse the labels $\aaa$ and $\bbb$. Thus our labeling guarantees that the point $\aaa \cap \tau'$ is  always higher than the point $\bbb \cap \tau'$ and we will say that $\tau$ is descending from $\aaa$ to $\bbb$. The situation when $\aaa=\Lo$ is usually harder to visualize so most figures have been drawn to depict that case. If $\tau = \emptyset$ we pick the labels $\aaa$ and $\bbb$ arbitrarily.

If  $S=\pi^{-1}(s)$, $s \geq 0$ is a level sphere we call $S$ {\em an alternating sphere for $D^*$} and $s$ an
\textit{alternating level for $D^*$} if the critical point of $L$
just above $S$ and the critical point of $L$ just below $S$ are on
different sides of the c-disk $D^*$. We will also consider $P$ and the lowest level sphere above $D^*$ to be alternating. We will call two alternating
levels $s<s'$ {\em adjacent} if there is no alternating level
between them (but there may be non-alternating levels). Note that if $s'<s$ are two adjacent alternating levels, then one of
$\alpha \cap \pi^{-1}[s',s]$ or $\beta \cap \pi^{-1}[s',s]$ is a
product. 

 If $D$ is a compressing disk for $P$ we can always obtain a cut-disk for $P$ by simply isotoping an endpoint of $L^+$ across $D$. We do not want to consider such cut-disks so
 if $D^c$ is a cut-disk for $P$ and one of $\tau \cap \alpha$ or $\tau \cap \beta$ is parallel to $P\cup D^c$ in the complement of the link then $D^c$ will be called a {\em fake cut-disk}. 
If $D^c$ is a fake cut-disk, let $E$ be the disk of parallelism between $\tau \cap \alpha$ or $\tau \cap \beta$ and $P\cup D^c$. Then the boundary of a regular neighborhood of $D^c \cup E$ contains a compressing disk for $P$, we will call this disk the {\em associated compressing disk}. 
 
We will restrict our attention to tangles such that no component of the tangle is parallel to $S^2\times\{-1\}$ or $S^2\times\{1\}$ in the tangle complement. We will call such tangles {\em proper}. Most results in this paper are stated for proper tangles although many of them can be modified to hold also for general tangles but the statements become cumbersome. Note that if $L \subset S^2 \times [-1,1]$ is a tangle and $S^2 \times \{-1\}$ and $S^2 \times \{1\}$ are incompressible in the tangle complement, then the tangle is proper so restricting our attention to these tangles is natural. When $L$ is a proper tangle we will consider $S^2\times\{-1\}$ and $S^2\times\{1\}$ to be thin level spheres.

\section{Isotoping $D$ to be vertical} \label{sec:makingDvertical}

Let $L$ be a link or a tangle as described above. It will be useful to move parts of $L \subset S^{2} \times [-1,1]$
vertically, that is without changing the projection of $L$ to
$S^{2}$, but altering only the height function $\pi$ on those parts.
Suppose, for example, $a < b$ are regular values for $\pi|_L$.  Take
$\epsilon > 0$ so small that there are no critical values of $\pi|_L$
in either of the intervals $[a, a+\epsilon]$ or $[b, b +
\epsilon]$.  Let $h: [a, b + \epsilon] \rightarrow [a, b +
\epsilon]$ be the homeomorphism that consists of the union of the
linear homeomorphisms $[a, a+ \epsilon] \rightarrow [a, b]$ and
$[a + \epsilon, b + \epsilon] \rightarrow [b, b + \epsilon].$

\begin{defin} Let $\sigma$ be a collection of components of $L \cap
(S^{2} \times [a, b + \epsilon])$.  The push-up of $\sigma$ past
$S^{2} \times \{ b \}$ is the image of $\sigma$ under the
homeomorphism $H: S^{2} \times [a, b + \epsilon] \rightarrow S^{2}
\times [a, b + \epsilon]: (x, t) \mapsto (x, h(t))$. (Figure
\ref{fig:pushup})
\end{defin}

\begin{figure}
\begin{center} \includegraphics[scale=.5]{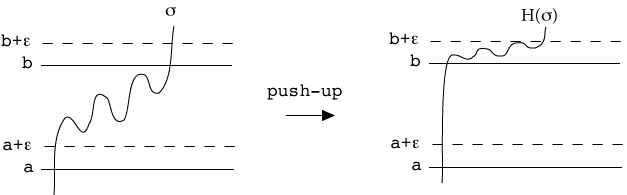}
\end{center}
\caption{} \label{fig:pushup} \end{figure}

Notice that all critical points of $H(\sigma)$ lie in $S^{2} \times
[b, b+\epsilon]$.  Since there is a linear isotopy from
$h$ to the identity, $\sigma$ is properly isotopic to $H(\sigma)$ in
$S^{2} \times [a, b+\epsilon]$.  This isotopy from $\sigma$ to
$H(\sigma)$ is called {\em pushing the critical points of $\sigma$
above the sphere $\pi^{-1}(b)$}.  There is an obvious symmetric
isotopy that pushes the critical points of $\sigma$ below the
sphere $\pi^{-1}(a)$. The following well known fact makes explicit the effect of these isotopies on the width of $L$.

\begin {rmk}\label{rmk:effects}

Let $L$ be a link or a tangle. If a maximum of $L$ is isotoped to lie above (resp. below) a minimum of $L$, the width of the tangle is increased (resp. decreased) by 4. Moving a maximum 
past a maximum or a minimum past a minimum has no effect on the width.

\end{rmk}

These isotopies of $\sigma$ only make sense as
isotopies of $L$ if they do not move $\sigma$ across any
other part of $L$.  In \cite{Tom}, using the argument of \cite{Wu}, we showed that if $L$ is a link and $D$ is a compressing disk for some level sphere for $L$, there is an isotopy that doesn't change the width of $L$ and allows us to assume the disk is vertical. This result made it possible to push up or down $L^+ \cap B^{in}$ in the complement of $L^+ \cap B^{out}$ and vice-versa. The following lemma extends this result to compressing disks for tangles. The proof is essentially the same so we include only  brief sketch for completeness.

\begin{lemma} \label{lem:diskisvertical}
     Let $L$ be a link or a tangle embedded in $S^2 \times [-1,1]$.  Let $P=\pi^{-1}(0)$ be a level 
sphere for $L$ and let $D$ be a compressing disk for $P$ lying above it. Then there is an isotopy of $L$ which restricts to a horizontal isotopy on $S^2 \times [-1,0]$ and on $S^2 \times [a,1]$ and which is level preserving on $\alpha$ and $\beta$ such that after the isotopy we may assume that $D$ is vertical. 
 \end{lemma}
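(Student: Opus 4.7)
The plan is to adapt the proof for links given in \cite{Tom} (which in turn follows \cite{Wu}) to the tangle setting, proceeding by induction on the number of critical points of $\pi|_D$.

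First I would arrange that $\pi|_D$ is a Morse function with distinct critical values, none coinciding with a critical value of $\pi|_L$, and with all critical values lying in $(0,a]$ where $a$ is the height of the topmost point of $D$. In this setup, $D$ is vertical precisely when $\pi|_D$ has a single critical point: a maximum at height $a$. In that case $D \cap \pi^{-1}(t)$ is a single circle for each $t \in [0,a)$ and these circles sweep out a vertical annulus capped off by a level disk at height $a$.

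For the inductive step, I would take the lowest critical point $p$ of $\pi|_D$ strictly above $\partial D$. If $p$ is an interior minimum or a non-topmost maximum, the corresponding local cap is a subdisk of $D$ disjoint from $L$ whose boundary bounds a subdisk on the adjacent level sphere; the standard isotopy across this subdisk removes the critical point. If $p$ is a saddle, one examines the arc in the level sphere just below $p$ that joins two level circles of $D$; this arc either allows $p$ to be cancelled directly, or, when $L_+$ obstructs it, an accompanying horizontal slide of strands of $L_+$ within their own level spheres clears the obstruction, after which the cancellation proceeds. In each case the resulting isotopy of $L$ is level preserving on $\alpha \cup \beta = L_+$ and is the identity on $S^2 \times [-1,0]$ and on $S^2 \times [a,1]$.

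The main obstacle, and the one Wu's original argument addresses, is the saddle case: one must verify that the horizontal slides of $L_+$ needed to clear obstructions can be performed within individual level spheres without creating new crossings or new critical points, using the combinatorics of the level circles of $D$ on the level spheres between heights $0$ and $a$. That argument is insensitive to whether $L$ is a closed link or a tangle whose endpoints lie on $S^2\times\{\pm 1\}$, because all isotopies take place strictly between levels $0$ and $a$ and away from the tangle endpoints. Consequently the link proof from \cite{Tom} transfers essentially verbatim, so only a brief outline is needed here.
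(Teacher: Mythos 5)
Your proof takes a genuinely different route from the paper's, and the route has a gap.

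The paper's proof (the same argument as in \cite{Tom}) is not an induction on critical points of $\pi|_D$ but a single explicit geometric construction: after placing $\partial D$ as a round circle about the $z$-axis in $P \subset D^2 \times [-1,1]$, one horizontally shrinks $B^{in}$ into the solid cylinder over $D$, flattens $B^{in}$ into a thin slab just above $P$ by an isotopy $f$ supported in $B^{in}$, sets $E$ to be the boundary disk of $\partial D \times (0,\epsilon]$, horizontally pushes $B^{out}$ outside the cylinder, and finally applies $f^{-1}$; the composite is level preserving on $\alpha$ and $\beta$ and carries $E$ to a vertical compressing disk isotopic to $D$. No critical points of $D$ are ever cancelled, and indeed the lemma makes no thin-position hypothesis that could be used to rule out bad configurations during such cancellations. (Your scheme is also not what Wu does: Wu uses saddles of a hypothetical compressing disk for a thin sphere to exhibit width-reducing moves on $L$, not to verticalize the disk.)

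The concrete gap is the cancellation step. To remove the lowest critical point $p$ of $\pi|_D$ above $\partial D$ (say a local minimum), you need the level circle $C = D \cap \pi^{-1}(t)$ just above $p$ to bound a disk $E \subset \pi^{-1}(t)$ with $E \cap L = \emptyset$. When $E$ meets $L_+$, you propose clearing the obstruction by horizontal slides of the offending strands within $\pi^{-1}(t)$; but any such slide must carry points of $L_+ \cap E$ across $C$, or across some other level circle of $D$ contained in $E$, which would force $L$ to meet $D$. So the obstruction cannot in general be cleared horizontally, and the alternative --- pushing those strands vertically past the critical point of $D$ --- is exactly the non-level-preserving move the lemma is designed to forbid. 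Without either the thin-position hypothesis (absent here) or a different mechanism, the induction does not close, which is presumably why the paper sidesteps critical-point cancellation entirely.
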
    

\begin{proof} 
 By choosing a fiber in $S^2 \times [-1,1]$ that is disjoint from both $L$ and $D$ we can consider $L$ and $D$ to be embedded in $D^2 \times [-1,1]$. We place the standard Cartesian coordinate system 
    on the ball $D^2 \times [-1,1]$ and assume that $\bdd D$ is a circle centered at the origin. Then $P$ lies in the $xy$-plane and the compressing disk $D$
    lies in the upper half space.    
    
     We will build the desired isotopy in 
    several steps.     
    First perform a horizontal isotopy of the upper-half space which is the identity near the $xy$-plane, at each point decreasing the radial 
    distance between $\bdd B^{in}$ and the $z$-axis until $B^{in}$ is 
    entirely contained in $\bdd D \times I$. Next select $\epsilon$ 
    so small that in an $\epsilon$-neighborhood of the $xy$-plane
    $\Lo$ has a product structure. Perform an isotopy 
    $f$ that is the identity on $B^{out}$ and at the end of the isotopy $B^{in}$ is contained in a $\epsilon/2$-neighborhood of the $xy$-plane. 
    This isotopy is not level 
    preserving on $\Li$. Let $E$ be the disk which is the boundary of the ball $\bdd D \times (0, \epsilon]$. It is clear that $E$ is isotopic to $D$ as it is contained in the ball bounded by $D, f(D)$ and $P$.

    Next perform a horizontal isotopy $g$ which is the identity in the $\epsilon$-neighborhood of the $xy$-plane (in particular it is the identity on $E$), and on $B^{in}$ and which
     increases the radial distance between $\bdd B^{out}$ and the positive 
    $z$-axis until $B^{out}$ is entirely contained outside $\bdd D \times 
    I$. Finally perform $f^{-1}$ on $B^{in}$ which restores all critical 
    points of $\alpha$ to their original vertical position.  Note that $f^{-1}$ acts as a vertical isotopy on $E$. The 
    composition of these isotopies gives an isotopy which is level 
    preserving on $\Lo$ and $\Li$. The image of $E$ under $f^{-1}$ is a vertical compressing disk isotopic to $D$. \end{proof}

  \section{Piping one tangle through another tangle}
  
  The push-up isotopy described in the previous section is level preserving when restricted to $B^{in}$ or $B^{out}$. It is however possible for such an isotopy to introduce new critical points in the connecting strand as in Figure \ref{fig:BadPush}.

  \begin{figure}
\begin{center} \includegraphics[scale=.35]{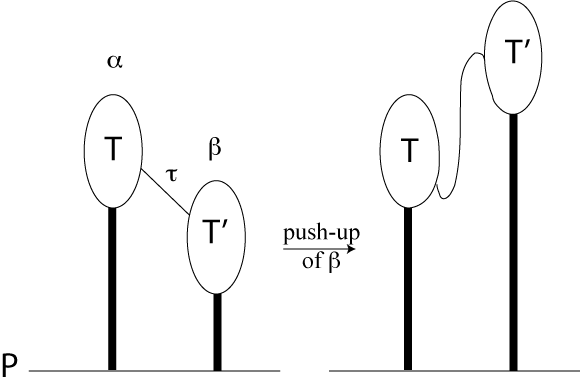}
\end{center}
\caption{An ellipse will always represent a tangle.} \label{fig:BadPush} \end{figure} 

   \begin{rmk} \label{rmk:nonewcritical}
Suppose $D^*$ is a vertical cut-disk. Then an isotopy of $L$ pushing critical points of $\alpha$ up or pushing critical points of $\beta$ down does not introduce any new critical points for $L$. 
    \end{rmk}

 The above remark assures us that we can isotope critical points of $\alpha$ up without introducing new critical points but we will often need to isotope critical points of $\beta$ up. To push up a submanifold of $\beta$ without introducing new critical points we will need a more complicated isotopy which we now describe.

  Suppose $L \subset S^2 \times [-1,1]$ is a link or a tangle and suppose $P =\pi^{-1}(0)$ is a thin level sphere with a vertical c-disk $D^*$. We will continue to denote by $\Lo$ and $\Li$ the components of $L_+$ that lie in the outside and inside of $D^*$ respectively. Let $\tau \cap D^*=p\in \pi^{-1}(c)$ and let $c \pm \epsilon$ be the heights of the endpoints of $\tau'=D^*_\delta \cap \tau$. Consider the first maximum, if there is such, we encounter along $\tau \cap \aaa$ starting from $p$, we will refer to this maximum as {\em the first maximum of $\tau \cap \aaa$}. Similarly we can define the first minimum of $\aaa \cap \tau$ and the first minimum and the first maximum of $\bbb \cap \tau$. Let $R=\pi^{-1}(r)$ be the lowest thin sphere for $\alpha$ above the first maximum of $\alpha \cap \tau$, if there is such. If there is no such maximum let $R=S^2 \times \{1\}$.
If there is a maximum in $\alpha \cap \tau$, by rearranging the maxima in $\alpha$ we may isotope the first such maximum to be the highest maximum of $\alpha$ below $R$. As during this rearrangement maxima are isotoped only above other maxima, the width of $L$ is not changed. We will make the assumption that we have performed this isotopy throughout the paper. 

\begin{figure}
\begin{center} \includegraphics[scale=.5]{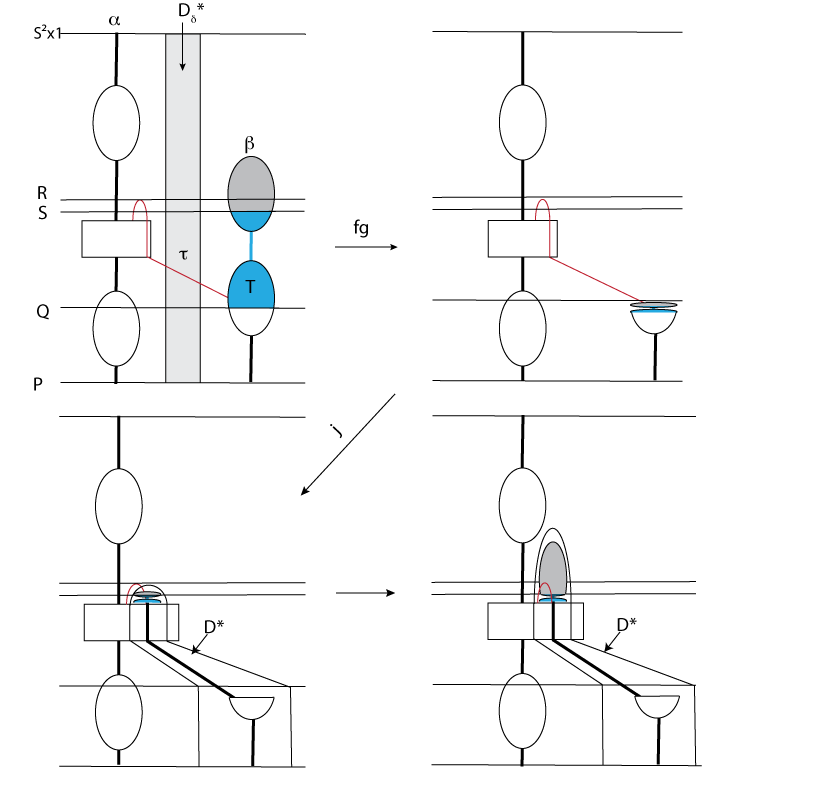}
\end{center}
\caption{An ellipse will always represent any tangle that is a subset of $L$.} \label{fig:pipe} \end{figure}

\begin{prop} \label{prop:piping}
Let $c+\epsilon \leq s< r$ be a regular value of $\pi$ and let $S=\pi^{-1}(s)$. Let $0\leq q \leq c-\epsilon$ be a regular value for $\pi$, $\pi^{-1}(q)=Q$, and let $T$ be the tangle $\pi^{-1}[q,s] \cap \bbb$. Then there is an isotopy of $L$ which preserves the fact that $D^*$ is vertical after which all critical points of $T$ are isotoped to lie in a neighborhood of $S$.
\end{prop}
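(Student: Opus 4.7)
My plan is to build the isotopy in two stages, separating the components of $T$ that are disjoint from the connecting strand $\tau$ from the subarc $T \cap \tau$. The first stage is a standard push-up inside $B^{in}$, while the second is a \emph{piping move} that uses the vertical annulus part of $D^*$ to route the subarc $T \cap \tau$ up near $S$ without disturbing $\tau'$ or $\tau \cap \alpha$. Throughout, the isotopy will be supported away from $B^{out}$ and (essentially) away from $D^*_\delta$, so that $\alpha$, $\tau'$, and the verticality of $D^*$ are preserved.

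For the first stage, each component of $T \setminus \tau$ lies entirely in $B^{in}$, disjoint from $D^*_\delta$. By choosing a small neighborhood of $T \setminus \tau$ inside $B^{in}$ and extending by the identity outside, the push-up isotopy $H(x,t) = (x, h(t))$ from Section \ref{sec:makingDvertical} moves all critical points of $T \setminus \tau$ into $\pi^{-1}[s, s + \eta]$ for suitably small $\eta$. Because $B^{in}$ and $B^{out}$ are separated by $D^*_\delta$, this isotopy leaves $\alpha$, $\tau$, and $D^*$ untouched and hence preserves the verticality of $D^*$.

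The second stage handles the subarc $T \cap \tau = \tau \cap \beta \cap \pi^{-1}[q, c - \epsilon]$, whose top endpoint sits on the lower face of $\tau' \subset D^*_\delta$. A naive push-up would drag that endpoint upward, destroying the verticality of $D^*$ and, as Remark \ref{rmk:nonewcritical} warns, potentially creating new critical points on $\tau \cap \alpha$. Instead, I use the verticality of $D^*$: write $D^* = A \cup D_0$ with $A$ a vertical annulus and $D_0$ a level disk, take a parallel push-off $A'$ of $A$ slightly inside $B^{in}$, and use $A'$ as a vertical guide. The piping isotopy fixes the top endpoint at height $c - \epsilon$ and reroutes $\tau \cap \beta$ so that from there it ascends vertically along $A'$ up to height $s - \eta'$, executes all former critical points of $T \cap \tau$ in the thin slab $\pi^{-1}[s - \eta', s + \eta]$, and descends vertically back down to meet the rest of $\tau \cap \beta$ at $Q$. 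Because $A$ is vertical and the support lies in $B^{in}$ together with a neighborhood of $A$, no new critical points are produced on $\tau \cap \alpha$ or within $\tau'$, and $D^*$ remains vertical. The main obstacle is precisely constructing this Stage~2 isotopy: one must check that $A'$ can be chosen so that the rerouted arc avoids $\alpha$, which is the reason for the hypothesis $s < r$ together with the preliminary arrangement placing the first maximum of $\alpha \cap \tau$ as the highest maximum of $\alpha$ below $R$ — these guarantee enough room in $B^{in} \cap \pi^{-1}[c+\epsilon, s]$ to perform the piping without creating new intersections.
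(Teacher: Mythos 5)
Your proposal is incomplete: it only addresses the paper's Case~1, where $\beta = \Li$, and silently assumes this case throughout. You write that ``each component of $T \setminus \tau$ lies entirely in $B^{in}$,'' but this is false when $\alpha = \Li$ and $\beta = \Lo$, i.e., when the point $\Lo \cap \tau'$ is the lower endpoint. In that case (the paper's Case~2) the tangle $T$ lives in $B^{out}$, the natural barrier to pushing it up is the bottom of $D^*_\delta$ itself, and the trick of piping along a parallel copy $A'$ of the vertical annulus \emph{inside} $B^{in}$ does not apply. The paper handles this case with a genuinely different idea: it shows that the subarc $\sigma'$ of $\tau$ between $p$ and the first intersection of $\tau$ with $S$ can be made into an understrand in a $yz$-plane projection by iteratively sliding overcrossings across subdisks of $D^c$, and only then does it shorten $\sigma'$ to pipe $T$ up to $S$. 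Your proposal has no analogue of this crossing-reduction step, and your stated reason for needing $s<r$ and the first-maximum normalization (``avoiding $\alpha$'' inside $B^{in}$) is not the actual role of those hypotheses --- in your Case~1 setting $\alpha \subset B^{out}$ is separated from $B^{in}$ by $D^*_\delta$ automatically, so there is nothing to avoid. Those hypotheses are doing work in Case~2, which you haven't considered.

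Even restricted to Case~1 your route differs from the paper's. The paper pushes $T$ \emph{down} first (safe by Remark~\ref{rmk:nonewcritical}), then pushes the rest of $\beta \cap \pi^{-1}[q,1]$ down, then shortens $\tau$ to lift $T$ back up into a product slab below $S$, undoes the second push-down, and finally applies a horizontal isotopy to restore the verticality of $D^*$; verticality is temporarily broken in the intermediate steps. Your version pushes $T \setminus \tau$ \emph{up} directly and then pipes $T \cap \tau$ along $A'$, aiming to preserve verticality throughout. That is plausible in Case~1, and somewhat cleaner conceptually, but you would still need to verify carefully that the ascent along $A'$ does not introduce intersections with the part of $\beta$ that was not in $T$, and more seriously you need to supply an argument for Case~2, without which the proposition as stated is not proved.
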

\begin{proof}

{\bf Case 1: $\alpha=\Lo$.} Consider the following compositions of isotopies, see Figure \ref{fig:pipe}:  let $g$ be the isotopy that pushes down the critical points of $T$ past $\pi^{-1}(q-\delta)$ for some small $\delta$. Then $g(\beta)$ has no critical points in the interval $(q-\delta,s)$ and all critical points of $T$ now lie in the interval $(q-2\delta, q-\delta)$, say. Let $f$ be the isotopy that pushes down $g(\beta) \cap \pi^{-1}[q, 1]$ past $\pi^{-1}(q)$. Then $fg(\beta)$ is disjoint from $\pi^{-1}(t)$ for all $t \geq q$, $fg(\beta) =\beta$ on the interval $(0, q-2\delta)$ and all critical points of $\beta$ that were originally above $S$ now lie in the interval $(q-\delta/2, q-\delta)$ say. Let $j$ be the isotopy that decreases the length of $\tau$ while simultaneously increasing the lengths of the arcs of $\beta$ lying in a small neighborhood of $\pi^{-1}(q-2\delta)$ until all critical points of $T$ lie directly below $S$ and all critical points of $\beta$ that were originally above $S$ again lie above it. Then $hfg(\beta)=\beta$ on $(0, q-2\delta)$ and $hfg(\beta)$ is a product on the interval $(q-2\delta, s-2\delta)$. Next apply $f^{-1}$ to restore the heights of all critical points of $\beta$ above $S$ to their original height. Finally it is clear that there is a horizontal isotopy $k$ that restores $D^*$ to its vertical position. 
 
{\bf Case 2: $\aaa=\Li$.}
\begin{figure}
\begin{center} \includegraphics[scale=.5]{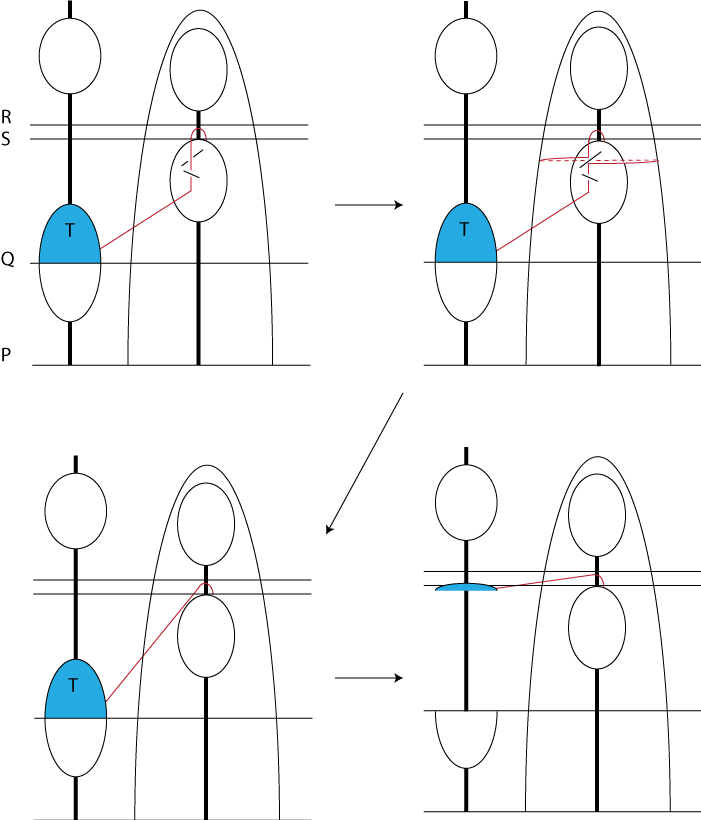}
\end{center}
\caption{} \label{fig:gettingout} \end{figure}
In this case $\tau \cap \aaa$ must have a maximum which, as before, we can assume to lie directly below the thin sphere $R$. Let $q$ and $q'$ be the two points of the intersection $S \cap \tau$ chosen so that $q'$ is between $p$ and $q$. Let $\sigma$ be the component of $\tau-\{p \cup q\}$ containing $q'$ and let $\sigma'$ be the component of $\sigma-q'$ containing $p$ as one endpoint. Consider a projection of $\alpha$  into the $yz$-plane. See Figure \ref{fig:gettingout}. Suppose there are any crossings where $\sigma'$ is the overstrand. Let $u$ be the point of $\sigma'$ that lies over some subarc of $L$ and is closest to $q'$. Isotope a small neighborhood $\gamma$ of $u$ in $\sigma'$ to lie on $D^c$. The arc $\gamma$ together with an arc $\gamma' \subset D^c$ cobound a circle with a single minimum and a single maximum coinciding with the endpoints of $\gamma$ which is the boundary of a subdisk $D'$ of $D^c$. This disk gives an isotopy between $\gamma$ and $\gamma'$. The isotopy is level preserving and decreases the number of overcrossings of $\sigma$. After finitely many iterations we may assume that all crossings of $\sigma$ are undercrossings. In particular we can isotope $\sigma'$ to lie outside $D^c$. 

Let $f'$ be the isotopy that pushes down $T$ until all critical points of $T$ lie in a small neighborhood of $Q$ and let $j'$ be the isotopy of $f'(L)$ that decreases the length of $\sigma'$ while simultaneously increasing the lengths of the arcs of $\beta$ lying in a small neighborhood of $\pi^{-1}(q)$ until all critical points of $T$ lie in a neighborhood of $S$. 

\end{proof} 
\begin{defin} 
The isotopies described in Proposition \ref{prop:piping} will be called {\em piping the tangle $T$ to $S$ along $\tau$}. 
\end{defin}
\begin{rmk} \label{rmk:piperesult} The result of piping a tangle $T \subset \beta$  to $S$ along $\tau$ is that all critical points of $\beta$ that lie in $T$ are isotoped to lie in a neighborhood of $S$ without introducing any new critical points. \end{rmk}

\section{Organizing critical points into braid boxes}

 Let $\sigma$ be a 1-manifold embedded in a ball $B$ containing some critical points 
  with respect to some height function $\pi:B \rightarrow [-1,1]$. We will not consider boundary points of $\sigma$ to be critical points. Recall that a level disk $\pi^{-1}(t)$ is called a {\em thick disk for $\sigma$} if the critical point of $\sigma$ directly above it is a maximum and the critical point directly below it is a minimum. If the positions of the minimum and the maximum are reversed, the disk is called a {\em thin disk for $\sigma$}.  We will also consider $\pi^{-1}(1)$ (resp. $\pi^{-1}(-1)$)  to be a thin disk if the highest (resp. lowest) critical point of $\sigma$ is a maximum (resp. minimum). The critical points of $\sigma$ can be grouped 
  as follows:
   Let $t_1,\ldots,t_k$; $t_i <t_{i+1}$, be a maximal collection of 
   thick disks for $\sigma$ such that $\sigma$ has some critical points between $t_i$ and $t_{i+1}$. Let 
  $t_i^+ > t_i$ be such that $\pi^{-1}(t_i^+)$ is the lowest thin disk for $\sigma$ that is above $\pi^{-1}(t_i)$ and $\pi^{-1}(t_i^+)$ lies directly above  the highest maximum below it, i.e., if $\sigma$ is a submanifold of some 1-manifold $L$, then the highest critical point  of $L$ below  $\pi^{-1}(t_i^+)$ belongs to $\sigma$. Let  $t_i^-< t_i$ be such that $\pi^{-1}(t_i^-)$ is the highest thin disk for $\sigma$ that is below $\pi^{-1}(t_i)$ and $\pi^{-1}(t_i^-)$ lies directly below the lowest minimum of $\sigma$ above it. It is possible that either of these thin disks does not exist. If they do $\pi^{-1}(t_i^+)$ and $\pi^{-1}(t_{i+1}^-)$ are parallel in the 
  complement of $\sigma$ unless $\sigma$ has a boundary point lying between them. We may assume  that $t_i^+ <t_{i+1}^-$.
   The ball
  $\pi^{-1}(t_i^-, t_{i}^+)$ will be called {\em a braid box for $\sigma$}.
 In 
  this region $\sigma$ has a sequence of minima that are below a 
  sequence of maxima.  
  
 If $L \subset S^2 \times [-1,1]$ then we saw there is a natural embedding of $L$ in $D^2 \times [-1,1]$ so the critical points of $L$ can be grouped into braid boxes. If $L$ is a link or a proper tangle, all critical points belong to some braid box.

\section{Vertical c-disks with thin alternating spheres} \label{sec:ifdisjoint}

Suppose $L$ is a link or tangle in thin position, $P$ is a level sphere with vertical c-disk $D^*$. As $P$ is a thin level sphere, the lowest critical point above it is a minimum. In fact the following proposition shows that both $\alpha$ and $\beta$ are proper tanlges
\begin{prop}\label{prop:proper} 
Suppose $L$ is in thin position, $P=p^{-1}(0)$ is a thin level sphere for $L$ and $D^*$ is a vertical c-disk for $P$. Then each of $\alpha$ and $\beta$ is a proper tangle.
\end{prop}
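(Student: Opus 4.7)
My plan is to prove this by contradiction: if some component of $\alpha$ or $\beta$ were parallel to $P$ or to $S^2\times\{1\}$, I would use the parallelism disk to produce a width-reducing isotopy of $L$. By the symmetry between $\alpha$ and $\beta$ in the setup, it suffices to work with $\alpha$.

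Suppose a component $c\subset\alpha$ is parallel to $P$ via a disk $D_0$ with $\partial D_0=c\cup c'$, $c'\subset P$, and $D_0\cap L=c$. Both endpoints of $c$ lie on $P$ and $c$ itself is above $P$, so the lowest critical point of $c$ above $P$ must be a maximum; let $M$ be the highest maximum of $c$, at height $h_M>0$. Because $P$ is thin, the lowest critical point of $L$ above $P$ is a minimum $m$ at height $h_m>0$; since this lowest critical of $L$ is a minimum while $c$'s lowest critical above $P$ is a maximum, $m\notin c$, so $0<h_m<h_M$. I will use $D_0$ to perform a local isotopy supported in a small product neighborhood of $M$ inside $D_0$, chosen small enough to meet no other critical point of $c$; such an isotopy lowers $M$ to any chosen height strictly less than $h_m$ without moving any other critical point of $L$. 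The isotopy is valid since $D_0\cap L=c$, and it is supported entirely on the side of $D^*_\delta$ containing $c$, so $D^*$ remains vertical. By Remark~\ref{rmk:effects}, as $M$ descends its passage past a maximum of $L$ contributes no change to the width while its passage past a minimum decreases the width by $4$; since the height of $M$ must cross $h_m$ the total width strictly decreases by at least $4$, contradicting thin position.

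The case $c\subset\alpha$ parallel to $S^2\times\{1\}$ only arises when $L$ is a proper tangle, and it is handled symmetrically: the highest critical of $c$ is then a minimum $m_c$, and since $S^2\times\{1\}$ is thin for proper tangles the highest critical of $L$ is a maximum $M_L\notin c$ with $h_{m_c}<h_{M_L}$; raising $m_c$ above $M_L$ via the parallelism disk again decreases the width by at least $4$ by Remark~\ref{rmk:effects}. The main obstacle is justifying that the local isotopy really moves only the single designated critical point $M$ (or $m_c$) and leaves the others in place, so that the width accounting reduces to a clean sum of contributions of Remark~\ref{rmk:effects}; this follows from the product structure of $D_0$ together with the fact that a sufficiently small neighborhood of $M$ on $c$ is disjoint from the other critical points of $c$ and $D_0\cap L=c$.
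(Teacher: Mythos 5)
Your approach differs from the paper's: the paper never invokes a boundary-parallelism disk at all, and in fact never argues componentwise.  It observes that, since $P$ is thin, the lowest critical point of $L$ above $P$ is a minimum, placed in, say, $B^{in}$; then if the lowest critical point of the other side $B^{out}$ were a maximum, it could be pushed down past $P$ \emph{inside $B^{out}$ alone} (verticality of $D^*$ lets you isotope one side without touching the other), sliding a maximum below a minimum and decreasing width, or, when both critical points lie on the connecting strand $\tau$, cancelling the pair.  This directly establishes that the lowest critical point of each of $\Li,\Lo$ is a minimum (and dually for maxima), which is the property the paper actually uses later to get braid boxes.  Your argument, by contrast, starts from a hypothetical boundary-parallel component $c$ and its parallelism disk $D_0$.

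There is a genuine gap in the key isotopy.  You say you lower $M$ below $h_m$ by ``a local isotopy supported in a small product neighborhood of $M$ inside $D_0$, chosen small enough to meet no other critical point of $c$.''  These two requirements are incompatible: if the supported neighborhood is small, the endpoints of the moved subarc $\gamma\subset c$ sit at heights close to $h_M>h_m$ and are held fixed, so pushing the interior of $\gamma$ down below $h_m$ does not simply lower $M$ --- it introduces a new minimum below $h_m$ and replaces $M$ by two shoulder maxima near the fixed endpoints, changing the critical structure in a way that Remark~\ref{rmk:effects} does not cleanly account for (and can increase width).  To genuinely lower the highest max of $c$ below $h_m$ you must move a subarc whose endpoints already sit below $h_m$, which forces you to sweep past all other critical points of $c$ lying between $h_m$ and $h_M$, precisely the case your ``small enough'' hypothesis excludes.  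A second unaddressed point: you assert the isotopy ``is supported entirely on the side of $D^*_\delta$ containing $c$'' but the parallelism disk $D_0$ is only known to be disjoint from $L$; nothing in the definition prevents $D_0$ from crossing the vertical disk $D^*$, so the claim that the isotopy preserves verticality of $D^*$ needs an argument (e.g.\ an innermost-disk surgery pushing $D_0$ off $D^*$).  Finally, your argument as written proves only that no component is boundary-parallel; the paper's proof, and its subsequent use of the proposition to organize $\Li$ and $\Lo$ into braid boxes, relies on the stronger Morse-theoretic conclusion that the lowest critical point of each side is a minimum and the highest is a maximum, which does not follow formally from non-parallelism alone.
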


\begin{proof}

Since $P$ is a thin level sphere, the lowest point of $L$ above it is a minimum. Suppose this minimum is contained in $B^{in}$, the other case is symmetric. If the lowest point of $L$ above $P$ in $B^{out}$ is a maximum, this maximum has to lie above some minima in $B^{in}$. A push-down of this maximum either results in pushing a maximum below a minimum decreasing the width of $L$ be $4$, or, if the maximum and the minimum both lie on the connecting strand, the push down results in an elimination of two critical points.  The argument that the highest critical point of each of $\alpha$ and $\beta$ is a maximum is similar and uses the fact that if $L$ is a tangle, it is a proper tangle.
\end{proof}

The above proposition shows and all critical points of $\Li$ and $\Lo$ can be organized into braid boxes. In this section we want to consider the case when no level sphere intersects braid boxes for both $\Li$ and $\Lo$. This assumption has the following nice implication to alternating spheres.

\begin{prop} \label{prop:altimpliesthin}
Suppose $D^*$ is a c-disk (not necessarily vertical) for a thin level sphere $P$ such that each of $\Li$ and $\Lo$ is a proper tangle and no level sphere intersects braid boxes for both $\Li$ and $\Lo$. Then any alternating sphere $S$ for $D^*$ is a thin sphere for $L$.

\end{prop}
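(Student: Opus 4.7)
The plan is to exploit the braid-box disjointness hypothesis to pin down the types of the critical points of $L$ bracketing $S$. The idea is that the braid box of one side of $D^*$ containing the critical point just below $S$ must lie entirely below the braid box of the other side containing the critical point just above $S$; once these braid boxes are vertically separated, the extreme critical points of each are forced to be exactly at the ends adjacent to $S$, giving thinness.

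Write $S=\pi^{-1}(s)$ and suppose first that $S$ is an alternating sphere in the non-conventional sense, so the critical point $c_\alpha$ of $L$ just below $S$ and the critical point $c_\beta$ of $L$ just above $S$ lie on opposite sides of $D^*$. By the symmetry between $\alpha$ and $\beta$ assume $c_\alpha\in\alpha$ and $c_\beta\in\beta$, so $c_\alpha<s<c_\beta$. By Proposition \ref{prop:proper}, both $\alpha$ and $\beta$ are proper tangles, so every critical point of each lies in some braid box; let $(a^-,a^+)$ and $(b^-,b^+)$ denote the braid boxes of $\alpha$ and $\beta$ containing $c_\alpha$ and $c_\beta$ respectively. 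The hypothesis that no level sphere meets braid boxes for both \Li{} and \Lo{} says that these two open intervals are disjoint, and since $c_\alpha<c_\beta$ this forces $a^+\leq b^-$.

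Now I would apply the definition of a braid box: $\pi^{-1}(a^+)$ lies directly above the highest maximum $c_\alpha^{\max}$ of $\alpha$ in $(a^-,a^+)$, so $c_\alpha^{\max}$ is the highest critical point of $\alpha$ in that braid box. No critical point of $\alpha$ can lie in the interval $(c_\alpha,s]$ by the choice of $c_\alpha$, and no critical point of $\alpha$ can lie in $(s,a^+)$ either, since any such point would be a critical point of $L$ strictly less than $c_\beta$ (because $a^+\leq b^-<c_\beta$), contradicting the choice of $c_\beta$. Hence every critical point of $\alpha$ in $(a^-,a^+)$ is $\leq c_\alpha$, so $c_\alpha^{\max}\leq c_\alpha$; but $c_\alpha$ itself is a critical point of $\alpha$ in this braid box, so $c_\alpha=c_\alpha^{\max}$ is a maximum. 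A symmetric argument applied to $\beta$, $c_\beta$, and $(b^-,b^+)$ shows $c_\beta$ is the lowest minimum of its braid box, and in particular a minimum. Therefore $S$ is thin.

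The conventional cases need only a brief remark: $S=P$ is thin by the standing hypothesis of the proposition, and for the sphere directly above $D^*$ the same braid-box argument applies with the first critical point of the opposite side playing the role of $c_\beta$. The main obstacle is really only notational, namely keeping track of which side of $D^*$ contains which critical point; the mathematical heart is the observation that the disjointness of braid boxes across \Li{} and \Lo{} is precisely strong enough to trap $c_\alpha$ at the top of its box and $c_\beta$ at the bottom of its.
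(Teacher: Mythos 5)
Your proof is correct and uses the same key idea as the paper's, namely the disjointness of the braid-box intervals on the two sides of $D^*$; you argue directly (forcing $c_\alpha$ to be the highest critical point of its box and $c_\beta$ the lowest of its, hence a maximum and a minimum respectively) while the paper argues by contradiction, but the underlying content is identical. One small correction: the properness of \Li{} and \Lo{} is a standing hypothesis of this proposition, not a consequence of Proposition~\ref{prop:proper}, whose own hypotheses (thin position and a vertical $D^*$) are not assumed here.
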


\begin{proof}
 Suppose $S$ is an alternating sphere that is not thin, say there are maxima below and above it. Then the level sphere directly below it intersects braid boxes for both $\Li$ and $\Lo$ contradicting the hypothesis. If both critical points are minima, the same result holds for the sphere directly above $S$.

\end{proof}

Suppose $D^*$ is a vertical c-disk for $P$ and all alternating spheres for $D^*$ are thin. Let $S_i=\pi^{-1}(s_i)$, $i=0,..,n$ be the alternating spheres for $D^*$
indexed from the top so that $S_0$ is the highest alternating sphere (which must be the lowest level sphere above $L^{in}$) and $S_n=P$. Recall that $\tau'=D^*_\delta \cap L$.  After possibly a horizontal isotopy of $\tau$ which does not affect the assumption that $D^*$ is vertical we may assume that the endpoints of $\tau'$ lie in the boundaries of braid boxes for $\Li$ and $\Lo$, in particular $\tau'$ intersects at least one alternating sphere. As $\tau'$ is descending from $\alpha$ to $\beta$, if it intersects exactly one alternating sphere the minimum above the sphere, if there is such, must belong to $\alpha$ and the maximum below it must belong to $\beta$. Suppose that $\tau'$ intersects multiple alternating thin spheres, say $S_m, S_{m+1},...S_{m+j}$. Let $S_r \in \{S_m, S_{m+1},...S_{m+j}\}$ be an alternating level sphere so that the minimum above it belongs to $\alpha$ and the maximum below it belongs to $\beta$, such a sphere exists as $j\geq 1$. Pick $\epsilon$ so small that $L$ has no critical points in a $2\epsilon$ neighborhood of $S_r$. Now we can perform a horizontal isotopy of $\tau'$ pushing the part of $\tau'$ that lies above the $2\epsilon$-neighborhood of $S_r$ to lie on the same side of $D^*$ as $\aaa$ and the part below to lie in the side of $D^*$ containing $\bbb$. The result is that after this isotopy the modified $\tau'$ intersects a unique alternating sphere and $D^*$ is still vertical. We will assume that this isotopy has been performed for the reminder of the paper and $S_r$ is the unique alternating sphere that $\tau'$ intersects. As $\tau$ is descending from $\aaa$ to $\bbb$ the critical point of $L$ directly above $S_r$ belongs to $\alpha$ and the one directly below $S_r$ belongs to $\beta$.
Let $M_{\alpha_i}$ and $m_{\alpha_i}$ be respectively the number of maxima and minima
 of $\alpha$ between $S_i$ and $S_{i-1}$.  Define $M_{\beta_i}$ and $m_{\beta_i}$ similarly. As the $S_i$'s are adjacent alternating spheres, exactly one of $M_{\alpha_i}$ and $M_{\beta_i}$ is non-zero for each $i$ and, as $\alpha$ and $\beta$ are proper, $M_{\alpha_i} \neq 0$ if and only if $m_{\alpha_i} \neq 0$, and $M_{\beta_i}\neq 0$ if and only if $m_{\beta_i} \neq 0$.
Note that $|S_{i-1} \cap L| < |S_{i} \cap L|$ if and only if $M_{\alpha_i}+M_{\beta_i}> m_{\alpha_i}+m_{\beta_i}$. 

 \medskip
{\em Suppose $L$ is a tangle in thin position, $P$ is a thin level sphere with a vertical c-disk $D^*$. With the above notation, the following hold.}
\medskip

{\bf Fact 1:} For all $i$ if $M_{\beta_{i}}> m_{\beta_{i}}$ then $M_{\alpha_{i+1}}> m_{\alpha_{i+1}}$ and if $M_{\beta_{i}}= m_{\beta_{i}} \neq 0$ then $M_{\alpha_{i+1}}\geq m_{\alpha_{i+1}}$.
\begin{proof}
Perform an isotopy of $\beta$ pushing all critical points of $\beta$ between $S_{i+1}$ and $S_{i-1}$ to just below $S_{i+1}$. By Remark \ref{rmk:nonewcritical} no new critical points are introduced. The result of the isotopy is that $M_{\beta_i}$ maxima are isotoped down past $m_{\alpha_{i+1}}$ minima and  $m_{\beta_i}$ minima are isotoped down past $M_{\alpha_{i+1}}$ maxima. Thus the width of $L$ is changed by $4(m_{\beta_{i}}M_{\alpha_{i+1}}-M_{\beta_{i}}m_{\alpha_{i+1}})$. As $L$ is thin, the change in the width must be non-negative. Therefore if $M_{\beta_i}>m_{\beta_i}$ then  $M_{\alpha_{i+1}}>m_{\alpha_{i+1}}$  and if  $M_{\beta_i}=m_{\beta_i}\neq 0$ then  $M_{\alpha_{i+1}}\geq m_{\alpha_{i+1}}$
 \end{proof}
 
 {\bf Fact 2:}  Suppose $i \neq r$. If $M_{\alpha_{i}}> m_{\alpha_{i}}$ then $M_{\beta_{i+1}}> m_{\beta_{i+1}}$ and if $M_{\alpha_{i}}= m_{\alpha_{i}} \neq 0$ then $M_{\beta_{i+1}}\geq m_{\beta_{i+1}}$.
\begin{proof}
The proof of this fact is analogous to the proof of Fact 1 but switching the roles of $\alpha$ and $\beta$. Note that the isotopy does not introduce new critical points as $\tau'$ does not intersect the region affected by the isotopy so the relative position of its endpoints does not change. 
 \end{proof}

{\bf Fact 3:} Suppose $r \neq 0$ and the first maximum of $\tau \cap \alpha$ does not lie between $S_{r}$ and $S_{r-1}$. If $M_{\alpha_{r}}> m_{\alpha_{r}}$ then $M_{\beta_{r+1}}> m_{\beta_{r+1}}$ and if $M_{\alpha_{r}}= m_{\alpha_{r}} \neq 0$ then $M_{\beta_{r+1}}\geq m_{\beta_{r+1}}$.
\begin{proof}
The argument here is still analogous to the proof of Fact 1 but to avoid creating new critical points it is necessary to pipe the tangle $T=\beta \cap \pi^{-1}(s_{r+1}, s_r)$ along $\tau$ to $S_{r-1}$.
\end{proof}

{\bf Fact 4:} Suppose $r \neq 0$ and the first maximum of $\tau \cap \alpha$ lies between $S_{r}$ and $S_{r-1}$. If $M_{\alpha_{r}}> m_{\alpha_{r}}$ then $M_{\beta_{r+1}}\geq m_{\beta_{r+1}}$.

\begin{figure}
\begin{center} \includegraphics[scale=.35]{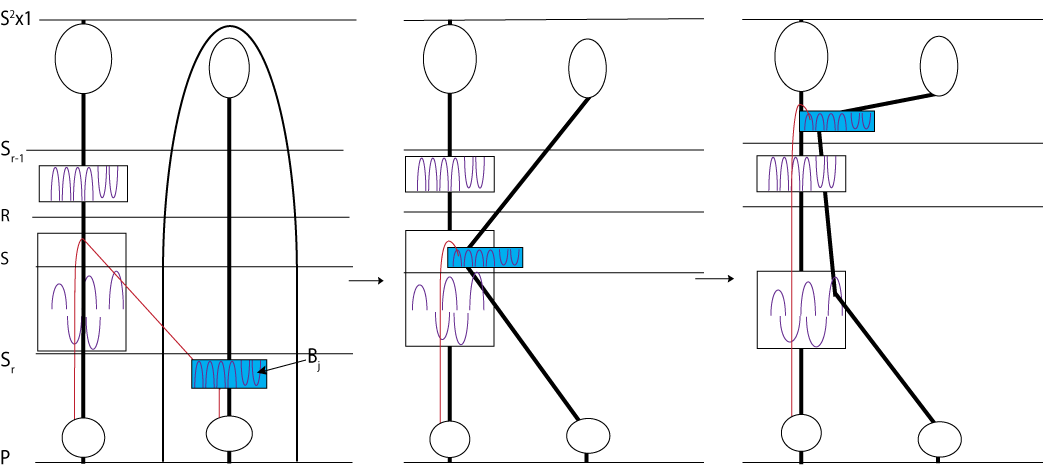}
\end{center}
\caption{A rectangle will always represent a braid box.} \label{fig:Fact4} \end{figure} 

\begin{proof}

Let $R$ be the lowest thin sphere above the first maximum of $\tau\cap \alpha$ and $S$ the level sphere just below it, see Figure \ref{fig:Fact4}. Recall that we have performed an isotopy of $L$ allowing us to assume that the critical point of $\alpha$ between $S$ and $R$ is the first maximum of $\tau \cap \alpha$. It may be that $R=S_{r-1}$ but that is not necessarily the case as $L$ might have (non-alternating) thin spheres between $S_{r}$ and $S_{r-1}$. 
Let $m_{\alpha_r}^-> 0$ be the number of minima of $\alpha$ lying between $S_{r}$ and $R$ and let $m_{\alpha_r}^+ \geq 0$ be the number of minima of $\alpha$ lying between $R$ and $S_{r-1}$. Define $M_{\alpha_r}^+$ and $M_{\alpha_r}^-$ similarly.

We will consider 2 isotopies, see Figure \ref{fig:Fact4}. In the first isotopy, $f$, pipe  
 the tangle $T=\beta \cap \pi^{-1}(s_{r+1}, s_r)$ along $\tau$ to $S$. The result of this isotopy is that $M_{\beta_{r+1}}$ maxima are isotoped above $m_{\alpha_r}^-$ minima and $m_{\beta_{r+1}}$ minima are isotoped above $M_{\alpha_r}^--1$ maxima. Thus the width of $L$ is changed by $4(m_{\alpha_{r}}^-M_{\beta_{r+1}}-(M_{\alpha_{r}}^- -1)m_{\beta_{r+1}})$.  Let $T'$ be the tangle $f(T)$ together with the first maximum of $\tau\cap \alpha$. This tangle has $m_{\beta_r}$ minima and $M_{\beta_r}+1$ maxima. 
Compose the isotopy $f$ with an isotopy $g$ pushing $T'$ up to $S_{r-1}$. The result of the composition of these  two isotopies changes the width of $L$ by 
 $$4(m_{\alpha_{r}}^-M_{\beta_{r+1}}-(M_{\alpha_{r}}^--1)m_{\beta_{r+1}}) + 4(m_{\alpha_{r}}^+(M_{\beta_{r+1}}+1)-M_{\alpha_{r}}^+m_{\beta_{r+1}})=$$
 $$=4(M_{\beta_{r+1}}m_{\alpha_r}+m_{\alpha_{r}}^+-m_{\beta_{r+1}}(M_{\alpha_r}-1)).$$ 

 As $L$ is thin we have the following sequence of inequalities:
 
 $$0 \leq M_{\beta_{r+1}}m_{\alpha_r}+m_{\alpha_{r}}^+-m_{\beta_{r+1}}(M_{\alpha_r}-1)<$$ $$M_{\beta_{r+1}}m_{\alpha_r}+m_{\alpha_{r}}-m_{\beta_{r+1}}(M_{\alpha_r}-1)=(M_{\beta_{r+1}}+1)m_{\alpha_r}-m_{\beta_{r+1}}(M_{\alpha_r}-1).$$
 
\medskip
 So, in summary, $0<(M_{\beta_{r+1}}+1)m_{\alpha_r}-m_{\beta_{r+1}}(M_{\alpha_r}-1)$. As $M_{\alpha_{r}}> m_{\alpha_{r}}$ it follows that  $(M_{\alpha_{r}}-1)\geq m_{\alpha_{r}}$ so $(M_{\beta_{r+1}}+1)> m_{\beta_{r+1}}$ and thus $M_{\beta_{r+1}}\geq m_{\beta_{r+1}}$
 \end{proof}

\section{Alternating spheres for compressing disks}

In this section we will only consider compressing disks and show that if $L$ is in thin position and $D$ is a compressing disk for a thin sphere $P$, then all braid boxes of $\alpha$ and $\beta$ must have disjoint heights and thus all alternating spheres are in fact thin. Recall that by Proposition \ref{prop:proper} all critical points of $\alpha$ and $\beta$ are contained in some braid box. The following lemma is a generalization of \cite[Lemma 3.5]{Tom}.

\begin{lemma} \label{lem:thickaredisjointDISK}
Let $L$ be a link or a proper tangle embedded in $S^2 \times I$, let $P$ be a level 
sphere for $L$ and let $D$ be a compressing disk for $P$. Suppose  $\pi^{-1}[a_{i}^-, a_i^+]$, $i=1,..,n$ and $\pi^{-1}[b_{j}^-, b_j^+]$, $j=1,..,m$ are the collections of braid boxes for $\alpha$ and $\beta$ respectively. Then for any $i$ and $j$, $[a_{i}^-, a_i^+] \cap [b_{j}^-, b_j^+] = \emptyset$.
\end{lemma}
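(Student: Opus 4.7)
The plan is to argue by contradiction: if the two braid boxes overlap, I push the $\bbb$-content of the $\bbb$-braid box vertically past the $\aaa$-braid box and either obtain a strict decrease of width (contradicting thin position) or conclude that the critical content of the two boxes lies in disjoint height ranges, forcing the braid boxes themselves to be choosable disjoint (contradicting the overlap hypothesis).

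First I would apply Lemma \ref{lem:diskisvertical} to isotope $L$ so that $D$ is vertical while preserving the width of $L$. Because $D$ is a compressing disk it is disjoint from $L$, so after this isotopy the region $\pi^{-1}[0,1]$ above $P$ is cut by $D$ into two product-like pieces $R_\aaa\supset\aaa$ and $R_\bbb\supset\bbb$; a vertical isotopy of $\bbb$ supported in $R_\bbb$ neither meets $\aaa$ nor creates new critical points on $\bbb$.

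Next, assume for contradiction that $[a_i^-,a_i^+]\cap[b_j^-,b_j^+]\neq\emptyset$. Write $M_\aaa,m_\aaa$ and $M_\bbb,m_\bbb$ for the numbers of maxima and minima of $\aaa$, $\bbb$ in the respective braid boxes. Let $A$ be the number of pairs $(p,q)$ with $p$ a maximum of $\bbb$ in the $\bbb$-braid box, $q$ a minimum of $\aaa$ in the $\aaa$-braid box, and $\pi(p)>\pi(q)$; analogously let $C$ be the number of pairs with $p$ a minimum of $\bbb$, $q$ a maximum of $\aaa$, and $\pi(p)>\pi(q)$. Set $B=M_\bbb m_\aaa-A$ and $D=m_\bbb M_\aaa-C$. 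Consider two isotopies supported in $R_\bbb$: a push-down of $\bbb\cap\pi^{-1}[b_j^-,b_j^+]$ so that its critical content lies just below $\pi^{-1}(a_i^-)$, and the symmetric push-up so that its critical content lies just above $\pi^{-1}(a_i^+)$. Tallying Remark \ref{rmk:effects} for each critical-point passing yields
\[
\Delta W_\downarrow=-4A+4C,\qquad \Delta W_\uparrow=4B-4D.
\]
Since $L$ is in thin position both must be nonnegative, so $C\geq A$ and $B\geq D$.

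I then split on $C$. If $C\geq 1$ there is a $\bbb$-minimum above an $\aaa$-maximum; because $\bbb$-minima lie below $\bbb$-maxima in the $\bbb$-braid box and $\aaa$-minima lie below $\aaa$-maxima in the $\aaa$-braid box, every $\bbb$-maximum is then above every $\aaa$-minimum, so $A=M_\bbb m_\aaa$ and $B=0$; the inequality $B\geq D$ forces $D=0$, and all critical points of $\bbb$ in the $\bbb$-box lie strictly above all critical points of $\aaa$ in the $\aaa$-box, so the braid-box endpoints may be chosen with $a_i^+<b_j^-$, contradicting overlap. If $C=0$ and $A\geq 1$, then $\Delta W_\downarrow=-4A<0$ contradicts thin position directly. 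If $C=A=0$, then the $\bbb$-content lies strictly below the $\aaa$-content and one may choose $b_j^+<a_i^-$, again contradicting overlap. The main technical subtlety is checking that in the two disjoint-content subcases the flexibility in the definition of the braid-box endpoints really allows the closed intervals to be disjoint; since those endpoints are constrained only by being ``directly above/below'' critical points of $L$ in their own braid box, strictly height-separated content admits a clean separation.
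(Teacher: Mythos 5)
Your overall strategy (make $D$ vertical via Lemma \ref{lem:diskisvertical}, then use vertical pushes and Remark \ref{rmk:effects} to get a width contradiction) matches the paper's, but your bookkeeping has a real gap and your case structure differs from the paper's in ways that matter.

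The key problem is the pair of width formulas $\Delta W_\downarrow = -4A+4C$ and $\Delta W_\uparrow = 4B-4D$. These tally only the interactions between critical points of $\bbb$ in the $j$-th braid box and critical points of $\aaa$ in the $i$-th braid box. But the pushes you describe move $\bbb$-content through the whole interval from $[b_j^-,b_j^+]$ down past $\pi^{-1}(a_i^-)$ (resp.\ up past $\pi^{-1}(a_i^+)$), and that interval can also contain critical points of $\aaa$ from the adjacent braid boxes $\pi^{-1}[a_{i\pm 1}^-,a_{i\pm 1}^+]$ and critical points of $\bbb$ from $\pi^{-1}[b_{j\pm1}^-,b_{j\pm1}^+]$. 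Each such passing contributes $\pm 4$ to the width change but is not recorded in $A,B,C,D$. Once those unaccounted terms are admitted, thin position no longer yields $C\ge A$ and $B\ge D$, and the chain of deductions in all three of your sub-cases collapses. (A secondary wording issue: ``push $\bbb\cap\pi^{-1}[b_j^-,b_j^+]$'' is not a collection of components of $L\cap(S^2\times[a,b+\eee])$ if an arc of $\bbb$ crosses $b_j^\pm$; the push-up/push-down construction in Section \ref{sec:makingDvertical} should be applied to $\bbb\cap\pi^{-1}[a_i^-,b_j^+]$ or $\bbb\cap\pi^{-1}[b_j^-,a_i^+]$ as the paper does.)

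The paper sidesteps exactly this difficulty by choosing the direction of the single push according to which thick level is higher: when $a_i \ge b_j$ it pushes $\bbb\cap\pi^{-1}(a_i^-,b_j^+)$ down, and the point of the case hypothesis is precisely that then \emph{no minimum is pushed below a maximum} --- every $\bbb$-minimum in the pushed range already sits below $a_i$, hence below every $\aaa$-maximum (including those in higher $\aaa$-boxes), so the extra terms you are implicitly dropping are genuinely zero or nonpositive. The width thus strictly decreases and one is done; the symmetric case $a_i < b_j$ is handled by swapping $\aaa$ and $\bbb$. By contrast, your argument always pushes $\bbb$, uses both directions, and in two of your three sub-cases ends not with a width contradiction but with a claim that the braid-box endpoints could be re-chosen to be disjoint. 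That re-choosing argument is plausible given the ``directly above/below'' rigidity of $a_i^\pm,b_j^\pm$, and it plays the role of the paper's (unproved) assertion that an overlap forces both $\aaa$ and $\bbb$ to have critical points inside it; but since those two sub-cases are only reached via the unjustified inequalities $C\geq A$, $B\geq D$, the proof as written does not go through. To repair it you should either adopt the paper's case split on $a_i$ versus $b_j$ together with the ``no minimum passes a maximum'' observation, or else carefully account in $\Delta W$ for all critical points in the interval swept out by the push, not just those in $A_i$ and $B_j$.
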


\begin{proof}  By Lemma \ref{lem:diskisvertical} we may assume $D$ is vertical via a level preserving isotopy. Such an isotopy has no effect on the alternating spheres and a level sphere is thin after the isotopy if and only if it was thin before the isotopy.

Suppose for some $i$ and $j$,  $[a_{i}^-, a_i^+] \cap [b_{j}^-, b_j^+] \neq \emptyset$.  This in particular implies that both $\alpha$ and $\beta$ have critical points in the overlapping region. Let  $B^{\alpha}$ is whichever of $B^{in}$ or $B^{out}$ contains $\aaa$ and similarly for $B^{\beta}$ and let $A_i=\pi^{-1}[a_{i}^-, a_i^+] \cap B^{\alpha} $ and $B_j=\pi^{-1}[b_{j}^-, 
  b_j^+]\cap B^{\beta}$. First suppose that $a_i \geq b_j$ (by $a_i= b_j$ we mean that at least one of $\alpha$ or $\beta$ has no critical points between $a_i$ and $b_j$). 
This implies that all maxima of $A_i$ are above all minima of $B_j$. Push all critical points of $\pi^{-1}(a_i^-, b_j^+)\cap \beta$ down to $\pi^{-1}(a_i^-)$, the isotopy is similar to the one depicted in Figure \ref{fig:Lemma441} however in the case under consideration $\tau=\emptyset$. This move slides critical points of $B_j$, at least one of which is a maximum, below critical points of $A_i$. In particular, before the isotopy at least one maximum of $B_j$ was above at least one minimum of $A_i$ and sliding that maximum down decreases the width of $L$ by 4. As no minima are pushed down below maxima the width of $L$ has been decreased, a contradiction.
  
If  $a_i < b_j$ switch the roles of $\alpha$ and $\beta$. As $D$ is a compressing disk, no critical points can be introduced by the isotopy.
\end{proof}

Lemma \ref{lem:thickaredisjointDISK}
 together with the Facts proven in Section \ref{sec:ifdisjoint} allows us to obtain the following useful result.    

  \begin{prop}\label{prop:altspheredecreasingwidthDISK}
 Let $L$ be a link or a proper tangle embedded in $S^2 \times I$, let $P$ be a level 
sphere for $L$ and let $D$ be a compressing disk for $P$. Let $S_i$, $i=0,..,n$ be the alternating spheres for $D$
indexed from the top so that $S_0$ is the highest alternating sphere and $S_n=P$. Let $M_{\alpha_i}$ and $m_{\alpha_i}$ be respectively the number of maxima and minima
 of $\alpha$ between $S_i$ and $S_{i-1}$.  Then for all $i \leq n$,  $M_{\alpha_i}\geq m_{\alpha_i}$ with
equality if and only if $M_{\alpha_i}= m_{\alpha_i}=0$. Similarly for $\beta$. 
\end{prop}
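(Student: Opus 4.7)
The plan is to combine Lemma~\ref{lem:thickaredisjointDISK} with Facts~1 and~2 of Section~\ref{sec:ifdisjoint} in a top-down induction on $i$. First, Lemma~\ref{lem:thickaredisjointDISK} says the braid boxes of $\alpha$ and $\beta$ lie at disjoint heights, so the hypothesis of Proposition~\ref{prop:altimpliesthin} is satisfied and every alternating sphere for $D$ is in fact a thin level sphere of $L$. Consequently, between any two consecutive alternating spheres at most one of $\alpha,\beta$ carries critical points, and the alternation built into the definition of ``alternating sphere'' forces the active side to flip from each nonempty region to the next. Because $D$ is a compressing disk we have $\tau=\emptyset$, so the distinguished index $r$ of Section~\ref{sec:ifdisjoint} never arises and Facts~1 and~2 apply for every index $i$ without qualification.

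For the base case $i=1$, relabel the sides if necessary so that $\alpha=\Li$. By the defining property of $S_0$ as the lowest level sphere above $\Li$, we have $\alpha\cap S_0=\emptyset$. Counting strands of $\alpha$ from $S_1$ upward through region~$1$ (each minimum contributes $+2$ to the strand count and each maximum contributes $-2$) yields
\[
|\alpha\cap S_1|\;=\;2(M_{\alpha_1}-m_{\alpha_1})\;\geq\;0,
\]
so $M_{\alpha_1}\geq m_{\alpha_1}$. If equality held with $M_{\alpha_1}>0$, every component of $\alpha\cap\pi^{-1}[s_1,s_0]$ would be a closed loop inside $B^{\alpha}\cap\pi^{-1}[s_1,s_0]$; that loop would then be a component of $L$ separable from the rest of $L$ by a $2$-sphere, contradicting unsplitness. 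The symmetric argument handles the case where $\beta$ is the active side in region~$1$, and the proper tangle case reduces to the same count once one checks that the convention defining $S_0$ really does force $S_0$ to be disjoint from the chosen side.

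For the inductive step, assume the conclusion holds through region $i$. If $\alpha$ is active in region $i$, the strict inequality $M_{\alpha_i}>m_{\alpha_i}$ is precisely the hypothesis of Fact~2, which yields $M_{\beta_{i+1}}>m_{\beta_{i+1}}$; if $\beta$ is active in region $i$, Fact~1 gives $M_{\alpha_{i+1}}>m_{\alpha_{i+1}}$ analogously. Since the active side alternates from one region to the next, the induction closes. The main obstacle I expect is the base case, and in particular verifying the strand-count input $\alpha\cap S_0=\emptyset$ in the proper tangle setting; once that is in hand, the rest of the argument is a purely formal propagation by Facts~1 and~2.
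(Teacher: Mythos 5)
Your proof is correct and follows essentially the same route as the paper: invoke Lemma~\ref{lem:thickaredisjointDISK} to conclude the alternating spheres are thin, establish the base case in region~$1$ from $\Li\cap S_0=\emptyset$ (the paper states the resulting strand-count conclusion more tersely, implicitly using unsplitness/properness to get $|\Li\cap S_1|\neq 0$), and propagate via alternating applications of Facts~1 and~2. Your observation that $\tau=\emptyset$ removes the index-$r$ exception is a useful clarification, and the equality-iff-zero clause is handled the same way in both proofs — the inactive side between adjacent alternating spheres carries no critical points.
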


\begin{proof} By Lemma \ref{lem:thickaredisjointDISK} no level sphere intersects braid boxes for both $\alpha$ and $\beta$ so all alternating spheres are thin. Note that $|\beta \cap S_0|= 0$ and $|\beta \cap S_1| \neq 0$ or $|\alpha \cap S_0|= 0$ and $|\alpha \cap S_1| \neq 0$, say the former (the other case is symmetric). Thus $M_{\beta_1}>m_{\beta_1}$ so the proposition holds for $i=1$. For larger values of $i$ we can show that proposition holds by multiple alternating applications of Fact (1) and Fact (2): $M_{\beta_1}>m_{\beta_1}$ implies that $M_{\alpha_2}> m_{\alpha_2}$ which in turn implies that $M_{\beta_3}>m_{\beta_3}$, etc. As only one of $\alpha$ or $\beta$ has critical points between a given pair of adjacent alternating spheres, it also follows that in this case $M_{\beta_{2k}}=m_{\beta_{2k}}=0$ and $M_{\alpha_{2k+1}}= m_{\alpha_{2k+1}}=0$.

\end{proof}    

\begin{cor} \label{cor:Top alternating sphere is thinnest}
Let $L$ be a link or a proper tangle in thin position, let $P$ be a thin level sphere and let $D$ be a compressing disk for $P$. Suppose $S_0,..,S_n$ are the alternating spheres for $D$, then $w(S_0)< w(S_1)<...< w(P)$.
 \end{cor}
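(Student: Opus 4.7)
The plan is to read off the corollary directly from Proposition \ref{prop:altspheredecreasingwidthDISK} together with the standard bookkeeping that relates $|S_i\cap L|$ to the critical points of $L$ lying between adjacent level spheres.

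First I would record the elementary fact, already noted in the text just before Fact 1, that as a level sphere is moved downward, crossing a maximum of $L$ increases the intersection count by $2$ while crossing a minimum decreases it by $2$. Since $S_i$ lies below $S_{i-1}$ and $\alpha\cup\beta$ accounts for all of $L_+$, this gives
\[
w(S_i)-w(S_{i-1}) \;=\; 2\bigl(M_{\alpha_i}+M_{\beta_i}\bigr) - 2\bigl(m_{\alpha_i}+m_{\beta_i}\bigr)
\]
for every $1\leq i\leq n$.

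Next I would invoke Proposition \ref{prop:altspheredecreasingwidthDISK}: between any two adjacent alternating spheres $S_{i-1}$ and $S_i$, one has $M_{\alpha_i}\geq m_{\alpha_i}$ with equality only when both vanish, and the analogous statement for $\beta$. Because $S_{i-1}$ and $S_i$ are adjacent alternating spheres for the compressing disk $D$, exactly one of $\alpha$ and $\beta$ has a critical point in $\pi^{-1}[s_i,s_{i-1}]$ (this is the property recorded in Section \ref{sec:notationanddef}). Hence in each strip $\pi^{-1}[s_i,s_{i-1}]$ exactly one of the pairs $(M_{\alpha_i},m_{\alpha_i})$ or $(M_{\beta_i},m_{\beta_i})$ is nonzero, and for that pair the strict inequality $M>m$ holds. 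Substituting into the displayed identity yields $w(S_i)>w(S_{i-1})$.

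Chaining these strict inequalities for $i=1,\dots,n$ and recalling $S_n=P$ gives $w(S_0)<w(S_1)<\cdots<w(P)$, as required. There is no real obstacle beyond the careful application of the Proposition; the only point to double-check is that the appeal to Lemma \ref{lem:thickaredisjointDISK} inside Proposition \ref{prop:altspheredecreasingwidthDISK} (ensuring every alternating sphere is thin, so that the strip between $S_{i-1}$ and $S_i$ really contains critical points of only one of $\alpha,\beta$) is legitimate here, which it is since $D$ is a compressing disk and $L$ is in thin position.
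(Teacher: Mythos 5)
Your proof is correct and takes essentially the same route as the paper: both read the corollary off Proposition \ref{prop:altspheredecreasingwidthDISK} via the width-change formula across adjacent alternating spheres. You are slightly more careful with the bookkeeping — the paper's stated identity $w(S_i)-w(S_{i+1})=(m_{\alpha_i}+m_{\beta_i})-(M_{\alpha_i}+M_{\beta_i})$ has an index shift and drops a factor of $2$, whereas your version $w(S_i)-w(S_{i-1})=2(M_{\alpha_i}+M_{\beta_i})-2(m_{\alpha_i}+m_{\beta_i})$ is the literal count; since only the sign matters, both give the same conclusion.
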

 
 \begin{proof}
This follows by Proposition \ref{prop:altspheredecreasingwidthDISK} and the fact that $w(S_i)-w(S_{i+1})=(m_{\alpha_i}+m_{\beta_i})-(M_{\alpha_i}+M_{\beta_i})$ and only one of $\alpha$ or $\beta$ has critical points in the region between $S_{i+1}$ and $S_i$.

\end{proof}

\section{Alternating spheres for vertical c-disks}

	The main result in the previous section is that if $D$ is a compressing disk (which we can assume to be vertical via a level preserving isotopy) and $L$ is in thin position then all alternating spheres are thin. In this section we will show that the corresponding result holds for all vertical c-disks, namely there is an horizontal isotopy $\nu$ so that after the isotopy the braid boxes on opposite sides of the vertical cut-disk have disjoint heights. The proof of this result is considerably harder than the proof of Lemma \ref{lem:thickaredisjointDISK} as we have to be careful not to create additional critical points in the connecting strand. 
		
\begin{lemma} \label{lem:thickaredisjoint}
Let $L$ be a link or a proper tangle embedded in $S^2 \times I$, let $P$ be a level 
sphere for $L$ and let $D^*$ be a vertical c-disk for $P$. If $L$ is not prime we further assume that any decomposing sphere for $L$ intersects $P$. Then there exists a horizontal isotopy $\nu$ which keeps $D^*$ fixed such that if $\pi^{-1}[a_{i}^-, a_i^+]$, $i=1,..,n$ and $\pi^{-1}[b_{j}^-, 
  b_j^+]$, $j=1,..,m$ are the collections of braid boxes for the proper tangles $\alpha $ and $\beta$ respectively, for any $i$ and $j$ $[a_{i}^-, a_i^+] \cap [b_{j}^-, 
  b_j^+] = \emptyset$.
\end{lemma}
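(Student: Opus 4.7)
The plan is to mimic the proof of Lemma \ref{lem:thickaredisjointDISK}: assume for contradiction that some $\alpha$-braid-box $\pi^{-1}[a_i^-, a_i^+]$ and some $\beta$-braid-box $\pi^{-1}[b_j^-, b_j^+]$ overlap in height, and then produce an isotopy of $L$ that strictly decreases $w(L)$. The only new difficulty, compared with the compressing-disk case, is that the naive push-down/push-up moves used there can create fresh critical points on the connecting strand $\tau$ when $D^*$ is a cut-disk. Fortunately we have exactly the tools needed: Remark \ref{rmk:nonewcritical} licenses the ``safe'' pushes (moving $\alpha$ up or $\beta$ down), and Proposition \ref{prop:piping} substitutes for the other direction via piping.

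I would split into the two cases according to the relative heights of the thick levels $a_i$ and $b_j$. If $a_i \geq b_j$, the argument of Lemma \ref{lem:thickaredisjointDISK} goes through essentially verbatim: push all critical points of $\pi^{-1}(a_i^-, b_j^+) \cap \beta$ down past $\pi^{-1}(a_i^-)$. This slides at least one $\beta$-maximum below at least one $\alpha$-minimum and hence decreases $w(L)$ by at least $4$, and by Remark \ref{rmk:nonewcritical} the push introduces no new critical points on $\tau$. If instead $a_i < b_j$, the symmetric move would push $\alpha$ downward, which is no longer safe. I would invoke Proposition \ref{prop:piping} to pipe the tangle $T = \beta \cap \pi^{-1}[b_j^-, s]$ up to a regular level $s$ just above $a_i^+$; by Remark \ref{rmk:piperesult} this introduces no new critical points while moving at least one $\beta$-minimum above at least one $\alpha$-maximum, once again decreasing $w(L)$ by at least $4$.

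The main obstacle is the sub-case in which the first maximum of $\tau \cap \alpha$ happens to lie inside the offending $\alpha$-braid-box, so that the target level $s > a_i^+$ demanded above would violate the constraint $s < r$ in Proposition \ref{prop:piping}. This is precisely the situation addressed by Fact 4 of Section \ref{sec:ifdisjoint}, and I would follow that same strategy: first pipe $T$ to the thin sphere immediately below the first maximum of $\tau \cap \alpha$, then push the enlarged tangle $T'$ (which now contains that $\alpha$-maximum) further up past the remaining $\alpha$-critical points of the braid box. Summing the two width changes exactly as in the Fact 4 calculation yields a strictly negative net contribution, giving the desired contradiction. The primeness / decomposing-sphere hypothesis enters at the end, to rule out fake cut-disks: if $D^*$ is a fake cut-disk one simply replaces it by its associated compressing disk and invokes Lemma \ref{lem:thickaredisjointDISK} directly, while the assumption that every decomposing sphere meets $P$ prevents a summand of $L$ lying on one side of $P$ from obstructing the piping isotopies.
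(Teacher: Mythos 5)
Your outline correctly reproduces the easy parts of the paper's argument — Case~$a_i\geq b_j$ via a direct push-down (paper's Case~1), the case where $\tau'$ avoids the overlap region via a safe push of $\alpha$ (paper's Case~2), and the generic $a_i<b_j$ case via piping $\beta$ up (paper's Case~3). But the argument as you've sketched it does not close the two hardest sub-cases, and it misidentifies where the decomposing-sphere hypothesis is actually used.

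First, you never set up the minimality assumption on $\nu$. The paper chooses $\nu$ to minimize the number of (non-parallel) level spheres that meet braid boxes on both sides, and this is essential for the case (paper's Case~4) where the first maximum of $\tau\cap\alpha$ is the unique critical point of $A_i$ above $\pi^{-1}(b_j^-)$ but $A_i$ has other maxima below. There, no width-decreasing move is available; instead one swings that single maximum horizontally to the $\beta$ side, fixing $D^*$, and contradicts minimality of the overlap count rather than thinness. Your proposal has no analogue of this step.

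Second, and more seriously, the ``Fact~4 strategy'' does not yield a contradiction when the first maximum of $\tau\cap\alpha$ is the \emph{only} maximum of $A_i$ (paper's Case~5). In that situation $M_{\alpha}^- = 1$, so the $(M_{\alpha}^--1)$ factor in the Fact~4 computation vanishes and the first piping step changes the width by $4m_{\alpha}^-M_\beta \geq 0$ — no decrease. Your second step, ``push $T'$ past the remaining $\alpha$-critical points of the braid box,'' is vacuous here because the braid box has no other maxima to push past. The paper instead escapes the braid box entirely: it uses the hypothesis that every decomposing sphere meets $P$ to conclude that $\pi^{-1}(a_i^+)$ is compressed by a subdisk $D\subset D^*$, recognizes $\pi^{-1}(a_i^+)$ as a thin sphere, applies the already-established compressing-disk results (Lemma~\ref{lem:thickaredisjointDISK} and Proposition~\ref{prop:altspheredecreasingwidthDISK}) to the sub-tangles $\alpha',\beta'$ above $\pi^{-1}(a_i^+)$, and only then produces a two-step isotopy whose second stage pushes $T'$ up past critical points \emph{above} $a_i^+$ to the next alternating sphere $P'$. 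This is also exactly where the decomposing-sphere hypothesis enters — not, as you suggest, to rule out fake cut-disks or to unobstruct piping. Without this construction the contradiction in Case~5 does not materialize, so the proof as proposed is incomplete.
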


\begin{proof} 
Assume that $\nu$ has been chosen amongst all possible horizontal isotopies preserving the fact that $D^*$ is vertical so that the number of non-parallel level spheres that intersect braid boxes for both $\alpha$ and $\beta$ has been minimized. By replacing $L$ with $\nu(L)$ we may assume that $L$ is in thin position, $D^*$ is a vertical c-disk for $P$ and the number of the number of level spheres that intersect braid boxes for both $\alpha$ and $\beta$ has been minimized up to horizontal isotopies of $L$ that fix $D^*$.

  Suppose for some $i$ and $j$,  $[a_{i}^-, a_i^+] \cap [b_{j}^-, b_j^+] \neq \emptyset$.  This in particular implies that both $\alpha$ and $\beta$ have critical points in the overlapping region. Let $A_i=\pi^{-1}[a_{i}^-, a_i^+] \cap B^{\alpha}$ and $B_j=\pi^{-1}[b_{j}^-, 
  b_j^+]\cap B^{\beta}$ where $B^{\alpha}$ is whichever one of $B^{in}$ or $B^{out}$ contains $\alpha$ and similarly for $B^{\beta}$. 
     
 {\bf Case 1:} $a_i \geq b_j$.  (As before  $a_i = b_j$ means that at least one of $\alpha$ or $\beta$ has no critical points between $a_i$ and $b_j$).
 
 The hypothesis of this case implies that all maxima of $A_i$ are above all minima of $B_j$. Push all critical points of $\pi^{-1}(a_i^-, b_j^+)\cap \beta$ down to $\pi^{-1}(a_i^-)$, see Figure \ref{fig:Lemma441}. This move slides critical points of $B_j$, at least one of which is a maximum, below critical points of $A_i$. In particular, before the isotopy at least one maximum of $B_j$ was above at least one minimum of $A_i$ and sliding that maximum down decreases the width of $L$ by 4. As no minima are pushed down below maxima and by Remark \ref{rmk:nonewcritical} no new critical points have been introduced, it follows that the width of $L$ has been decreased, a contradiction.

\begin{figure}
\begin{center} \includegraphics[scale=.5]{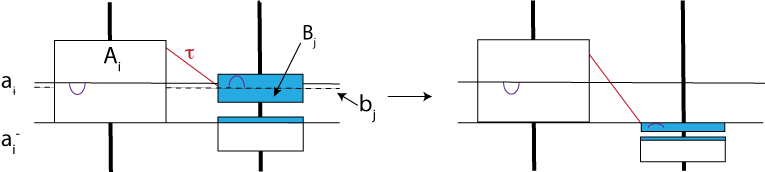}
\end{center}
\caption{} \label{fig:Lemma441} \end{figure} 

 {\bf Case 2:}  $a_i < b_j$ and at least one endpoint of $\tau'$ is not in the region $\pi^{-1}[b_j^-, a_i^+]$.
   
As $a_i \neq b_j$ both $\alpha$ and $\beta$ have critical points between $a_i$ and $b_j$.  In this case the argument is identical to the argument in Case 1 but $\pi^{-1}[b_j^-, a_i^+]\cap \alpha$ is pushed down to $\pi^{-1}(b_j^-)$. As the relative position of the endpoints of $\tau'$ is not affected by this isotopy, no new critical points are introduced. However maxima are isotoped down past at least one minimum so the width of $L$ is decreased, a contradiction. 
  
  {\bf Case 3:} $a_i < b_j$,  both endpoints of $\tau'$ lie in the region $\pi^{-1}[b_j^-, a_i^+]$,  and if the first maximum of $\tau\cap \alpha$ is in $A_i$ some minimum of $B_j$ is below at least two maxima of $A_i$.

  \begin{figure}
\begin{center} \includegraphics[scale=.5]{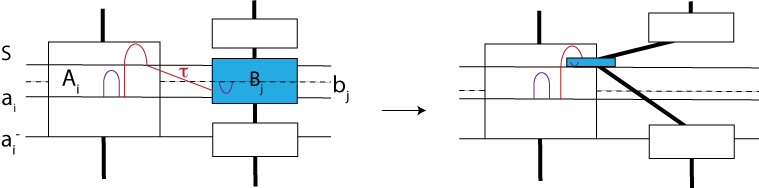}
\end{center}
\caption{} \label{fig:Lemma443} \end{figure}

Recall that we are assuming that if $\tau$ has a maximum in $A_i$ the first such maximum is the highest maximum of $A_i$. Let $S$ be a level sphere directly above all maxima of $A_i$ except the highest maximum if it belongs to $\tau$, (see Figure \ref{fig:Lemma443}), in particular if the first maximum of $\tau$ is not below $\pi^{-1}(a_i^+)$, then $S=\pi^{-1}(a_i^+)$. Pipe the tangle $\pi^{-1}(b_j^-, s) \cap \beta$ along $\tau$ to $S$. The result is that all critical points in $ \pi^{-1}(b_j^-, s) \cap \beta$ have been isotoped to lie above at least one maximum of $A_i$ without introducing any new critical points. By hypothesis $\pi^{-1}(b_j^-, s) \cap \beta$ has at least one minimum before the isotopy, $\pi^{-1}(b_j^-, s) \cap \alpha$ has at least one maximum and if it has any minima they are below any maxima of $B_j$ to begin with. Thus the isotopy decreases the width of $L$, a contradiction.

  {\bf Case 4:} $a_i < b_j$,  both endpoints of $\tau'$ lie in the region $\pi^{-1}[b_j^-, a_i^+]$, the first maximum of $\tau\cap \alpha$ is in $A_i$, it is not the only maximum in $A_i$ and all minima of $B_j$ lie above all other maxima of $A_i$.

  \begin{figure}
\begin{center} \includegraphics[scale=.4]{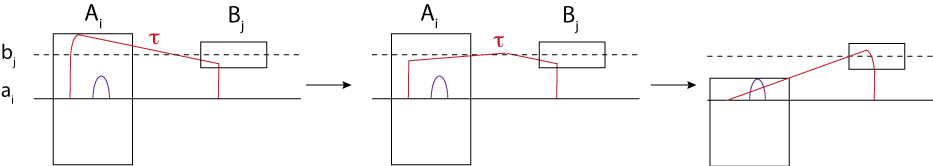}
\end{center}
\caption{} \label{fig:Lemma445} \end{figure} 
   
By the hypothesis of this case $A_i$ has only one critical point above $\pi^{-1}(b_j^-)$. Isotope this maximum horizontally without changing $D^*$ so it becomes a maximum for $\beta$, see Figure \ref{fig:Lemma445}. As $A_i$ has other maxima the effect is that at least one level sphere, $\pi^{-1}(b_j^-)$, is removed from the collection of level spheres which intersect braid boxes for both $\alpha$ and $\beta$ via a horizontal isotopy which fixes $D^*$, a contradiction.

   {\bf Case 5:}  $a_i < b_j$,  both endpoints of $\tau'$ lie in the region $\pi^{-1}[b_j^-, a_i^+]$, the first maximum of $\tau\cap \alpha$ is in $A_i$ and it is the only maximum in $A_i$.

As $a_i<b_j$ and $A_i$ has only one maximum it follows that $b_j >a_i^+$ and so the first critical point of $\beta$ above $\pi^{-1}(a_i^+)$ is a minimum.  

As every decomposing sphere for $L$ intersects $P$, each of $\alpha$ and $\beta$ must intersect $\pi^{-1}(a_i^+)$ at least twice so in fact $\pi^{-1}(a_i^+)$ is compressible with a compressing disk $D \subset D^*$. Then $\pi^{-1}(a_i^+)$ is a thin sphere in $L$:  the critical point below it is the maximum of $\tau\cap \alpha$ and the lowest critical points of both $\alpha$ and $\beta$ above $\pi^{-1}(a_i^+)$ are minima. For $\alpha$ this follows from the definition of $\pi^{-1}(a_i^+)$ and for $\beta$ it follows from the previous paragraph. Let $\alpha'$ and $\beta'$ be the subsets of $\alpha$ and $\beta$ respectively that lie above $\pi^{-1}(a_i^+)$. As both $\alpha'$ and $\beta'$ have a minimum as their lowest critical point and a maximum as their highest one, they are both proper tangles. By Lemma \ref{lem:thickaredisjointDISK} the braid boxes of $\alpha'$ and $\beta'$ have disjoint heights. Let $P'$ be the lowest alternating sphere for $L$ above $\pi^{-1}(a_i^+)$. Let $M_\beta'$ and $m_\beta'$ be respectively the number of maxima and minima of $\beta'$ in the region between $\pi^{-1}(a_i^+)$ and $P'$, and let $M'_{\alpha}$ and $m'_\alpha$ be the number of maxima and minima for $\alpha'$ lying between $\pi^{-1}(a_i^+)$ and $P'$. By the definition of $P'$ either $M_\beta'=m_\beta'=0$ or $M'_{\alpha}=m'_\alpha=0$.

{\bf Subcase 5A:} Suppose that the first critical point of $L$ above $\pi^{-1}(a_i^+)$ belongs to $\beta$ and so it is a minimum as we have already shown (in particular $M'_{\alpha}=m'_\alpha=0$), see Figure \ref{fig:Lemma446A}.

By Proposition \ref{prop:altspheredecreasingwidthDISK}, $M_\beta'>m_\beta'$. Let $m_\alpha$ be the number of minima in $A_i$, $A_i$ has one maximum by the hypothesis of this case. Consider the isotopy pushing the critical points of $\beta'$ contained between $\pi^{-1}(a_i^-)$ and $P'$ down to $\pi^{-1}(a_i^-)$. If $\beta$ has no maxima between $\pi^{-1}(a_i^-)$ and $\pi^{-1}(a_i^+)$, this isotopy changes the width of $L$ by $4(-M_\beta' m_\alpha+m_\beta' )$. As $M_\beta'>m_\beta'$ and $1\leq m_\alpha$, the width is decreased leading to a contradiction. If $\beta$ has maxima between $\pi^{-1}(a_i^-)$ and $\pi^{-1}(a_i^+)$, these maxima are pushed below minima decreasing the width of $L$ even more.

  \begin{figure}
\begin{center} \includegraphics[scale=.3]{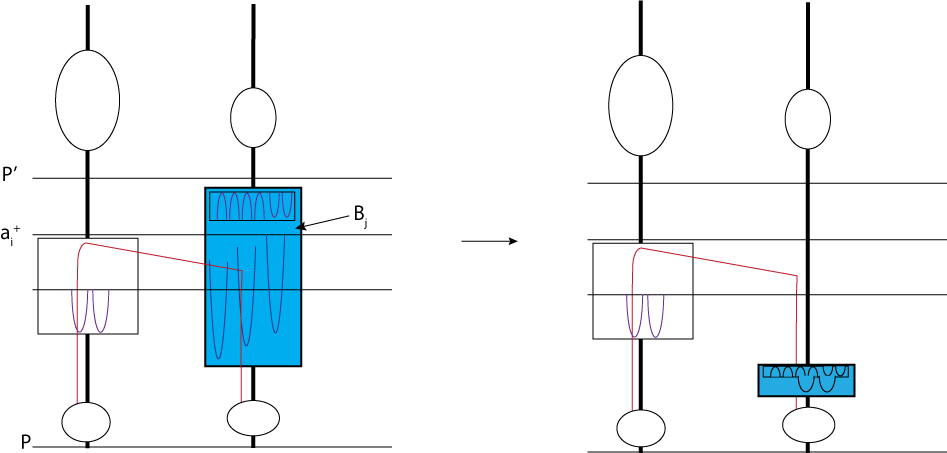}
\end{center}
\caption{} \label{fig:Lemma446A} \end{figure} 

{\bf Subcase 5B:} Finally suppose that the first critical point of $L$ above $\pi^{-1}(a_i^+)$ (necessarily a minimum) belongs to $\alpha$. By Proposition \ref{prop:altspheredecreasingwidthDISK},  $M'_\alpha>m'_\alpha$. Consider the tangle  $T=\pi^{-1}(b_j^-,a_i^+)\cap \beta$. We have already shown that this tangle only contains minima, say $m_\beta$ of them.  Pipe $T$ to $\pi^{-1}(a_i)$ along $\tau$, see Figure \ref{fig:Lemma446B}. This isotopy increases the heights of minima so it cannot increase the width of $L$ and it may decrease it. Now consider the tangle $T'$ consisting of $T$ together with the maximum of $A_i$. Push this tangle up to $P'$. This isotopy changes the width of $L$ by $4(-m_\beta M'_{\alpha}+m'_{\alpha})$. As $M'_\alpha>m'_\alpha$ and $m_\beta\geq 1$, the width is decreased leading to a contradiction.

  \begin{figure}
\begin{center} \includegraphics[scale=.3]{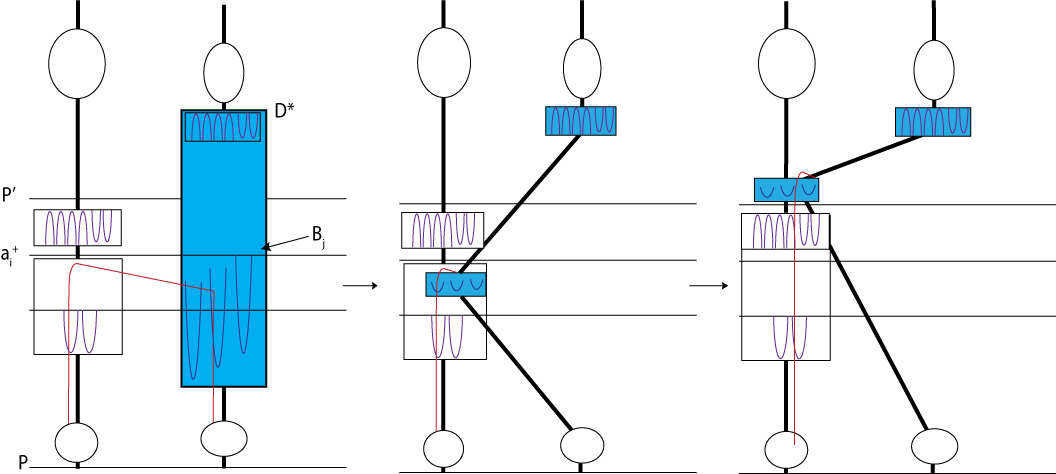}
\end{center}
\caption{} \label{fig:Lemma446B} \end{figure}

 \end{proof}

 \begin{theorem}\label{thm:altspheredecreasingwidth}
Let $L$ be a link or a proper tangle embedded in $S^2 \times I$ that is in thin position, let $P$ be a thin level 
sphere for $L$ and let $D^*$ be a vertical c-disk for $P$. Furthermore assume that $L$ has been isotoped so that all alternating spheres for $L$ are thin.
Let $S_i$, $i=0,..,n$ be the alternating spheres for $D^*$
counting from the top so that $S_0$ is the highest alternating sphere and $S_n=P$. If $D^*$ is a cut-disk let $S_r$ be the unique alternating thin sphere intersected by $\tau'$ and if $D^*$ is a compressing disk let $r=n$. Finally let $M_i$ and $m_i$ be respectively the number of maxima and minima of $L$ between $S_i$ and $S_{i-1}$. Then,
\begin{enumerate}

\item For all $i \leq r$,  $M_{i}> m_{i}$. 

\item If $r=0$ (in particular $D^*$ is a cut-disk) then $M_{{1}} \geq m_{{1}}$ with the equality if and only if $L$ has a decomposing sphere which is disjoint from all alternating spheres.

\item If either

\begin{enumerate}
\item $r \neq 0$ and the first maximum of $\tau \cap \alpha$ does not lie in the region between $S_r$ and $S_{r-1}$, or

 \item $r=0$ and $M_{1}> m_{1}$,
\end{enumerate}
then $M_{i}> m_{i}$ for all $i \leq n$. 

\item If either
\begin{enumerate}
\item $r \neq 0$ and the first maximum of $\tau \cap \alpha$ lies in the region between $S_r$ and $S_{r-1}$, or
\item $r=0$ and $M_{1}= m_{1}$
\end{enumerate}
then for $n \geq i>r$,  $M_{i}\geq m_{i}$.
 However if there is a $j>r$ such that $M_{j}> m_{j}$ then  $M_{i}> m_{i}$ for all $j\leq i\leq n$.

\end{enumerate}

\end{theorem}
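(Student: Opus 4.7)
The plan is to iterate Facts~1--4 of Section~\ref{sec:ifdisjoint} along the chain of alternating spheres $S_0>S_1>\dots>S_n=P$: Facts~1 and 2 handle the routine ``$i\to i+1$'' propagation (Fact~2 requires $i\neq r$), while Facts~3 and 4 handle the delicate transition across $S_r$. The hypotheses of all four Facts are available by our standing assumption that every alternating sphere for $L$ is thin.

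I would begin with Part~(1), where the content is nontrivial only when $r\ge 1$. The base case $M_1>m_1$ follows from an intersection-count across $I_1$. Since $S_0$ is by definition the lowest level sphere above $L^{in}$ we have $|L^{in}\cap S_0|=0$; the critical points in $I_1$ are exactly those of $L^{in}$ (because the topmost maximum of $L^{in}$ lies just below $S_0$); and any arc of $L^{in}$ whose maximum is in $I_1$ must cross $S_1$ from below (either because its endpoints lie on $P$ or, in the exceptional configuration $r=1$ where the $\tau$-arc has its endpoint on $\partial D^*_{\delta}$ inside $I_1$, because thin position forbids an otherwise-cancellable max/min pair on that isolated arc). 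Hence $|L^{in}\cap S_1|>0$, and combined with the formula $|L^{in}\cap S_0|=|L^{in}\cap S_1|+2(m_1-M_1)$ this forces $M_1>m_1$. Strict inequality then propagates along $i=2,\dots,r$ by Facts~1 and 2, both of which apply since every intermediate index differs from $r$.

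For Parts~(3) and (4) the transition at $i=r$ is made by Fact~3 in case (3a) (first maximum of $\tau\cap\alpha$ not in $I_r$), giving $M_{r+1}>m_{r+1}$, or by Fact~4 in case (4a), giving only $M_{r+1}\ge m_{r+1}$. Below $S_r$ one propagates via Facts~1 and 2 again (all indices $>r$ are automatically $\neq r$); the ``strict begets strict'' clauses carry strict inequality through to $S_n$, giving Part~(3). In Part~(4) the weak inequality $M_i\ge m_i$ persists for $i>r$, and a single strict inequality at some $j>r$ triggers strict inequality for all $j\le i\le n$ by the same strict clauses. Parts~(3b) and (4b) are handled identically, using the hypothesized $M_1>m_1$ or $M_1=m_1$ at $i=1$ as the starting point of the chain (when $r=0$, every $i\ge 1$ satisfies $i\neq r$ and Facts~1 and 2 apply throughout).

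Finally Part~(2), the case $r=0$. Now the endpoint of $\tau'$ on the $\beta$-side (where $\beta=L^{in}$) lies inside $I_1$, contributing an extra $-1$ to the intersection count, so that $|L^{in}\cap S_1|=2(M_1-m_1)+1\ge 1$ and therefore $M_1\ge m_1$. Equality $M_1=m_1$ is equivalent to $|L^{in}\cap S_1|=1$; geometrically this means that the only arc of $L^{in}$ meeting $S_1$ is the $\tau$-strand, so that capping the portion of $L^{in}$ below $S_1$ by a disk in $S_1$ together with the $L^{in}$-side subdisk of $D^*$ produces a $2$-sphere disjoint from every alternating sphere that decomposes $L$. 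Conversely any such decomposing sphere forces this single crossing with $S_1$, completing the iff. The main obstacle is this geometric construction in Part~(2): one must verify both that the constructed sphere separates $L$ into two nontrivial pieces and that it can be arranged to be disjoint from every $S_i$. The rest of the proof is a fairly mechanical bookkeeping of Facts~1--4 along the alternating-sphere chain.
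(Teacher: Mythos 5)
Your proof takes essentially the same route as the paper's: the base case at $i=1$ is settled by the intersection count $|L^{in}\cap S_0|=0$, $|L^{in}\cap S_1|>0$ (the paper simply cites Proposition~\ref{prop:altspheredecreasingwidthDISK}, whose own proof is this same count), the transition across $S_r$ is carried by Fact~3 in case~(3a) and Fact~4 in case~(4a), and the remaining indices are handled by alternating applications of Facts~1 and~2 exactly as in the paper, with strict inequalities begetting strict inequalities. The one place where you are vaguer than you should be, though it is also a point the paper itself handles tersely, is the exceptional configuration $r=1$ with $\alpha=L^{in}$: there the $\tau'$-endpoint contributes an odd term to the count, so the displayed identity $|L^{in}\cap S_0|=|L^{in}\cap S_1|+2(m_1-M_1)$ fails and ``thin position forbids a cancellable pair'' does not by itself yield strictness; the clean way out is to observe that the arcs of $L^{in}\cap I_1$ other than the $\tau$-piece each contribute $+1$ to $M_1-m_1$, and at least one such arc must exist because $|L^{in}\cap S_1|\geq 2$ (the $\tau$-piece plus some other strand of $L^{in}$ reaching $P$, since $D^*$ is not a compressing disk when $r\geq 1$). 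Similarly, in Part~(2) the verification that the constructed sphere is genuinely decomposing and disjoint from every $S_i$ is no harder than the paper's corresponding one-line claim, but you were right to flag it as the step requiring care.
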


\begin{proof}
As $S_i$ and $S_{i-1}$ are adjacent alternating spheres, either $M_i=M_{\alpha_i}$ and $M_{\bbb_i}=0$ or $M_i=M_{\beta_i}$ and $M_{\alpha_i}=0$. Similarly for $m_i$.

{\bf  Conclusion 1:} This follows directly from Proposition \ref{prop:altspheredecreasingwidthDISK}. 

{\bf Conclusion 2: } If $r=0$ then $|\Li  \cap S_r|=1$. As $| \Li  \cap S_{r+1}| \geq 1$ it follows that $M_{r+1} \geq m_{r+1}$ with the equality if and only if $|\Li \cap S_{r+1}|=1$.
In that case $D^*$ and $S_1$ cobound a decomposing sphere for $L$ that is disjoint from all alternating level spheres for $D^*$.

{\bf Conclusion 3: } If $r \neq 0$, for all $i \leq r$ this conclusion is just a restatement of Conclusion (1). From that we know that $M_{{r}}> m_{{r}}$. As the first maximum of $\tau$ does not lie between $S_r$ and $S_{r-1}$, from Fact (3) it follows $M_{{r+1}}> m_{{r+1}}$. If $r=0$, $M_{{r+1}}> m_{{r+1}}$ by hypothesis. Now, as in Proposition \ref{prop:altspheredecreasingwidthDISK}, multiple alternating applications of Fact (1) and Fact (2) give the desired result for all $r+2 \leq i \leq n$.

{\bf Conclusion 4: } If $r \neq 0$, by Conclusion (1) we know that $M_{{r}}> m_{{r}}$. By Fact (4), it follows that $M_{{r+1}}\geq m_{{r+1}}$. If $r=0$, $M_{{r+1}}= m_{{r+1}}$ by hypothesis.  Again alternate applications of Fact (1) and Fact (2) give the desired result (in this case, non-strict inequalities).   However if there is a $j>r$ such that $M_{j}> m_{j}$ or $M_{j}> m_{j}$ then  Fact (1) and Fact (2) give strict inequalities for $j \leq i \leq n$.

\end{proof}

 \begin{cor}\label{cor:thindecreasing}
 
   Suppose $L$ is a link or a proper tangle in thin position embedded in $S^2 \times I$, let $P$ be a thin level 
sphere which intersects any decomposing sphere for $L$, let $D^*$ be either a compressing disk or a vertical cut-disk for $P$ which is not a fake cut-disk. Suppose $L$ has been isotoped, as is always possible, so that all alternating spheres are thin. If $S_0,..,S_n$ are the alternating spheres for $D^*$ in $L$, then $w(S_0)< w(S_1) <...<w(S_r)\leq w(S_{r+1})\leq...\leq w(P)$ and if for some $j> r$ we have $w(S_{j}) < w(S_{j+1})$, then $w(S_i) < w(S_{i+1})$ for all $i>j$.
\end{cor}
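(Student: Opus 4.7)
The plan is to read off Corollary \ref{cor:thindecreasing} directly from Theorem \ref{thm:altspheredecreasingwidth} by translating the inequalities between $M_i$ and $m_i$ into inequalities between widths. The key relation is that for adjacent alternating spheres $S_{i-1}$ (above) and $S_i$, exactly one of $\alpha, \beta$ contributes critical points in the region between them, so $w(S_i)-w(S_{i-1})=2(M_i-m_i)$. Thus $w(S_{i-1})<w(S_i)$ if and only if $M_i>m_i$, and $w(S_{i-1})\leq w(S_i)$ if and only if $M_i\geq m_i$.

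First I would handle the region up to $S_r$. Conclusion (1) of Theorem \ref{thm:altspheredecreasingwidth} gives $M_i>m_i$ for every $i\leq r$, which translates to $w(S_0)<w(S_1)<\cdots<w(S_r)$. This gives the strict part of the claim without needing any additional input.

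Next I would treat the region past $S_r$ by splitting on whether the first maximum of $\tau\cap\alpha$ lies between $S_r$ and $S_{r-1}$ (or, in the degenerate case $r=0$, on whether $M_1=m_1$). Conclusion (3) then supplies strict inequalities all the way to $S_n=P$, while Conclusion (4) supplies the weak inequalities $w(S_r)\leq w(S_{r+1})\leq\cdots\leq w(P)$ together with the propagation-of-strictness clause: once strict inequality appears at some index $j>r$, it persists for every $i\geq j$. This propagation statement is exactly what Facts (1) and (2) (applied alternately above $S_{j}$) give us, since any strict inequality $M_j>m_j$ on one side forces strict inequalities on the opposite tangle in the next alternating layer.

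The only remaining point is the $r=0$ exceptional case of Conclusion (4), where the theorem would only give $M_1\geq m_1$. Here I would invoke the hypothesis that $P$ meets every decomposing sphere of $L$: by Conclusion (2), equality $M_1=m_1$ when $r=0$ would produce a decomposing sphere disjoint from all alternating spheres, hence disjoint from $P=S_n$, contradicting this hypothesis. So in fact $M_1>m_1$ strictly, and Conclusion (3) then applies to yield strict inequalities throughout. I expect this decomposing-sphere step to be the only subtle point; everything else is bookkeeping translating the combinatorial inequalities of Theorem \ref{thm:altspheredecreasingwidth} into the width inequalities of the corollary.
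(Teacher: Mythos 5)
Your proof is correct and takes essentially the same route as the paper, which disposes of the corollary in one line by citing Theorem \ref{thm:altspheredecreasingwidth} together with the width-difference identity between adjacent alternating spheres. Two small observations: your version of the identity, $w(S_i)-w(S_{i-1})=2(M_i-m_i)$, has the index and the factor of $2$ placed correctly (the paper's printed form $w(S_{i+1})-w(S_i)=M_i-m_i$ is written loosely, though only the sign is used), and your extra step invoking Conclusion (2) and the decomposing-sphere hypothesis to force $M_1>m_1$ when $r=0$ actually yields \emph{strict} inequalities throughout, which is more than Corollary \ref{cor:thindecreasing} asserts for indices past $S_r$ but is exactly the argument the paper deploys for the subsequent Corollary \ref{cor:not compressible on one side}.
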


\begin{proof}
This follows from Theorem \ref{thm:altspheredecreasingwidth} and the fact that $w(S_{i+1})-w(S_{i})=M_{i}-m_{i}$.
\end{proof}

\begin{cor}\label{cor:not compressible on one side}
Let $L$ be a link or a proper tangle in thin position and suppose $P$ is a thin sphere which intersects any decomposing spheres for $L$. Let $D^*$ be a compressing disk or a vertical cut-disk for $P$ and let $S_0$ be the lowest thin level sphere above $D^*$. Then $w(S_0)<w(P)$.

\end{cor}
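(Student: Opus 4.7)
The plan is to deduce this corollary directly from Corollary \ref{cor:thindecreasing} (plus, in one case, a sharpening from the equality clause of Theorem \ref{thm:altspheredecreasingwidth}). Using Lemma \ref{lem:thickaredisjoint} together with Proposition \ref{prop:altimpliesthin}, I would first perform a horizontal isotopy of $L$ that fixes $D^*$ and forces the braid boxes of $\aaa$ and $\bbb$ to have disjoint heights; then every alternating sphere for $D^*$ is already a thin sphere of $L$. Index the alternating spheres from the top as $S_0, S_1, \ldots, S_n = P$, so that the $S_0$ of the statement (the lowest thin level sphere above $D^*$) is precisely the top alternating sphere. Let $r$ be as in Theorem \ref{thm:altspheredecreasingwidth}: the index of the unique alternating sphere that $\tau'$ meets when $D^*$ is a cut-disk, and $r = n$ when $D^*$ is a compressing disk. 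If $D^*$ happens to be a fake cut-disk I would first replace it by its associated compressing disk, which is again a vertical c-disk above $P$ and has the same $S_0$.

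If $r \geq 1$, then Corollary \ref{cor:thindecreasing} already supplies the strict chain $w(S_0) < w(S_1) < \cdots < w(S_r)$, and combining this with the weak chain $w(S_r) \leq \cdots \leq w(S_n) = w(P)$ immediately yields $w(S_0) < w(P)$.

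The only case needing extra input is $r = 0$, where the strict part of the corollary's chain is vacuous. Here I would invoke Conclusion (2) of Theorem \ref{thm:altspheredecreasingwidth}: it asserts $M_1 \geq m_1$ with equality if and only if $L$ admits a decomposing sphere disjoint from every alternating sphere. But $P = S_n$ is itself alternating, and by hypothesis $P$ meets every decomposing sphere of $L$, so no such exceptional decomposing sphere exists. Therefore $M_1 > m_1$, i.e.\ $w(S_0) < w(S_1)$. Conclusion (3) of the same theorem (equivalently the propagation clause at the end of Corollary \ref{cor:thindecreasing}) then upgrades every subsequent weak inequality to strict, giving $w(S_0) < w(S_1) < \cdots < w(S_n) = w(P)$.

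So the corollary is really just a repackaging of Corollary \ref{cor:thindecreasing}; the only delicate step is the $r = 0$ case, where the hypothesis on decomposing spheres is needed to rule out the equality alternative in Conclusion (2) of Theorem \ref{thm:altspheredecreasingwidth} and thereby seed a strict inequality that then propagates upward.
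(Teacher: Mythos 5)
Your proposal is correct and matches the paper's own proof almost exactly: both reduce to a non-fake cut-disk, apply Corollary \ref{cor:thindecreasing} when $r\neq 0$, and in the $r=0$ case invoke Conclusion (2) of Theorem \ref{thm:altspheredecreasingwidth} together with the hypothesis that $P$ meets every decomposing sphere to rule out equality, then propagate via Conclusion (3). The only difference is that you spell out a couple of steps the paper leaves implicit (the horizontal isotopy making alternating spheres thin, and the explicit use of the decomposing-sphere hypothesis).
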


\begin{proof}
By possibly replacing $D^*$ with the associated compressing disk, we may assume $D^*$ is not a fake cut-disk. Let $S_0,..,S_n$ be the alternating spheres for $D^*$ in $L$. Then $w(P)<w(S_0)$: this follows by Corollary \ref{cor:thindecreasing} if $r\neq 0$. If $r=0$ Conclusions (2) and (3) of Theorem \ref{thm:altspheredecreasingwidth} establish that $w(S_0)<w(S_1)<...<w(S_n)=w(P)$.

\end{proof}

\begin{cor} \label{cor:incomp} Let $L$ be a link or proper tangle in thin position and suppose $P$ is a thin sphere which intersects any decomposing spheres for $L$ and $P$ has minimum width amongst all thin spheres for $L$. Then $P$ is incompressible and does not have any vertical cut-disks.
\end{cor}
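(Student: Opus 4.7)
The plan is to argue by contradiction, reducing directly to Corollary \ref{cor:not compressible on one side}. I would begin by supposing, for contradiction, that $P$ admits a c-disk $D^*$ that is either a compressing disk or a vertical cut-disk. By reflecting the height function via $\pi \mapsto -\pi$ if necessary, I may assume $D^*$ lies above $P$: this reflection is a symmetry of $S^2 \times [-1,1]$ (or $S^3$) that preserves thin position, preserves the width of every level sphere, carries proper tangles to proper tangles, and sends thin spheres to thin spheres (a sphere with a maximum directly below and a minimum directly above reflects to a sphere with a minimum directly above and a maximum directly below --- still thin by definition). It also sends vertical c-disks to vertical c-disks.

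With $D^*$ above $P$, the hypothesis that $P$ meets every decomposing sphere for $L$ is exactly what Corollary \ref{cor:not compressible on one side} requires. Applying that corollary to the pair $(P, D^*)$ yields a thin level sphere $S_0$ above $D^*$ with $w(S_0) < w(P)$. But $S_0$ is itself a thin sphere of $L$, contradicting the hypothesis that $P$ has minimum width among all thin spheres of $L$.

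I do not anticipate a serious obstacle: the content of this corollary is essentially a symmetrisation of the preceding one, and the fake cut-disk case is already absorbed into Corollary \ref{cor:not compressible on one side} via replacement by the associated compressing disk. The only points requiring care are verifying that the reflection preserves all the relevant structure listed above, and confirming that the thin sphere $S_0$ actually exists --- the latter is automatic for proper tangles since $\pi^{-1}(1)$ is thin by convention, and for links it follows from $P$ being thin (which forces a minimum immediately above $P$) together with the existence of a highest maximum of $L$, guaranteeing a thin sphere above $D^*$.
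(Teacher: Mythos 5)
Correct, and this is the paper's approach: the paper's proof is literally ``Follows immediately from Corollary \ref{cor:not compressible on one side}.'' Your reflection step is the implicit ``without loss of generality'' the paper has been using since Section \ref{sec:notationanddef} to place $D^*$ above $P$; making it explicit, along with the existence of $S_0$, is reasonable diligence but not a departure from the paper's route.
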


\begin{proof}
Follows immediately from Corollary \ref {cor:not compressible on one side}.
\end{proof}

 \end{document}